\newcolumntype{R}{>{\displaystyle}r}
\newcommand{\R}{\ensuremath{\mathbb{R}}}
\newcommand{\CC}{\mathcal{C}}
\newcommand{\C}{\ensuremath{\mathcal{C}}}
\newcommand{\CO}{\ensuremath{\mathcal{O}}}
\newcommand{\V}{\ensuremath{\mathcal{V}}}
\newcommand{\CZ}{\ensuremath{\mathcal{Z}}}
\newcommand{\ov}{\overline}
\newcommand{\dis}{\displaystyle}
\newcommand{\wtilde}{\widetilde}
\newcommand{\f}{\varphi}
\newcommand{\al}{\alpha}
\newcommand{\la}{\lambda}
\newcommand{\be}{\beta}
\newcommand{\x}{\mathbf{x}}
\newcommand{\sgn}{\mathrm{sign}}
\newcommand{\inte}{\mathrm{int\,}}
\newcommand{\tpitchfork}{%
	\raise-0.3ex\vbox{
		\baselineskip\z@skip
		\lineskip-.52ex
		\lineskiplimit\maxdimen
		\m@th
		\ialign{##\crcr\hidewidth\smash{$-$}\hidewidth\crcr$\pitchfork$\crcr}
	}%
}
\def\dis{\displaystyle}
\def\p{\partial}
\def\e{\varepsilon}
\newtheorem {theorem} {Theorem} 
\newtheorem {proposition} [theorem] {Proposition}
\newtheorem {lemma} [theorem] {Lemma}
\newtheorem {remark}[theorem]{Remark}
\newtheorem {mtheorem} {Theorem}
\begin{document}

\title[Codimension--two connection to a two-fold singularity]
{The generic unfolding of a codimension--two connection to a two-fold singularity of\\ planar Filippov systems}
\author[D.D.Novaes, M.A. Teixeira and I.O. Zeli]
{Douglas D. Novaes$^{1}$, Marco A.
Teixeira$^1$, and Iris O. Zeli$^{1}$}

\address{$^1$ Departamento de Matem\'{a}tica, Universidade
Estadual de Campinas, Rua S\'{e}rgio Buarque de Holanda, 651, Cidade
Universit\'{a}ria Zeferino Vaz, 13083--859, Campinas, SP, Brazil}
\email{ddnovaes@ime.unicamp.br}
\email{teixeira@ime.unicamp.br}
\email{irisfalkoliv@ime.unicamp.br} 

\subjclass[2010]{34A36, 34C23, 37G15}

\keywords{piecewise smooth differential system, Filippov system, two-fold singularity, periodic solutions, bifurcation theory}

\maketitle

\begin{abstract}
	
 Generic bifurcation theory was classically well developed for smooth differential systems, establishing
results for $k$-parameter families of planar vector fields. In the present study we focus on a qualitative analysis of $2$-parameter families, $Z_{\al,\beta}$, of planar Filippov systems assuming that $Z_{0,0}$ presents a co\-di\-mension-two minimal set. 
Such object, named elementary simple two-fold cycle, is characterized by a regular trajectory connecting a visible two-fold singularity to itself, for which the second derivative of the first return map is nonvanishing. We analyzed the codimension-two scenario through the exhibition of its bifurcation diagram.
\end{abstract}

\section{Introduction}
 
Ongoing research in dynamical systems includes naturally nonsmooth systems, which commonly appear in realistic nonlinear engineering and control  models. As far as we know, the pioneering studies  of piecewise smooth systems in a rigorous way is due to Andronov and coworkers \cite{AVK}.  

In the 1970's, Qualitative and Geometric  theoretical analyses of two-fold
singularities of planar piecewise smooth systems have been taken into account in many studies, see for instance \cite{E,T}.  In addition,   in \cite{F} Filippov provided a mathematical formalization of the theory of nonsmooth vector fields. Since then, from various sides, attention has been paid to the generic classification of such singularities in the two dimensional case, see for instance \cite{KRG, GST, K}. More recently, other aspects of the two-fold singularity have been considered. For instance, in \cite{CLV} the problem of birthing of limit cycles from two-fold singularities (pseudo-Hopf bifurcation) was revisited, and in \cite{S} a probabilistic notion for the forward evolution from a two-fold singularity has been given  under small perturbations.

The main goal of this paper is to describe the bifurcation diagram of a closed trajectory connecting a two-fold singularity to itself.  We call this trajectory by {\it simple two-fold cycle} (see Figure \ref{fig0}). We emphasize that this cycle has a certain resemblance with the classical saddle homoclinic connection of smooth planar vector fields.
\begin{figure}[h] 
	\begin{center}
		\begin{overpic}[width=6cm]{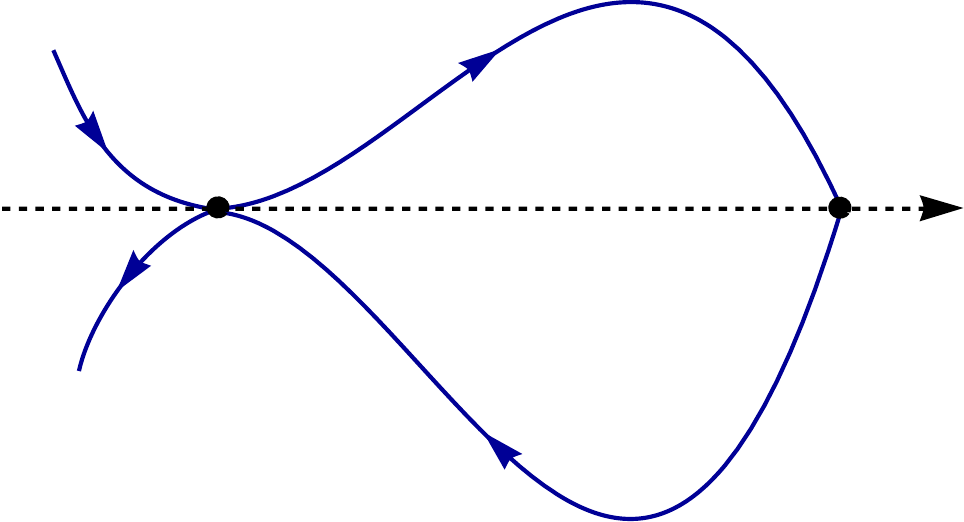}
		\end{overpic}
	\end{center}
	\caption{Simple two-fold cycle $\Gamma$.}
	\label{fig0}
\end{figure}

The study of bifurcations as well as the dynamics around invariant sets for smooth systems has classically been well
developed and extensively discussed over the years, mainly establishing conditions for the existence and persistence of
minimal sets (see \cite{SSTC,SSTC2,Rou}).  On the other hand, the theory of piecewise smooth differential systems has attracted considerable interest over the last decade (see, for instance,  \cite{BBCK,Var,CRM}, and references therein).  
In this direction many efforts are actually dedicated to understand  the dynamic behavior around some minimal sets when one finds no counterpart in the smooth world (see, for instance, \cite{AJMT,LH,NPV,NT}).

Regarding a simple two-fold cycle, there are several theoretical mathematical aspects related to it which are worthy of discussion. For instance, we mention branching of homoclinic cycles, periodic orbits, and heteroclinic trajectories, their stability properties, and the exhibition of its bifurcation diagram. Furthermore, the needed tools to analyze the simple two-fold cycle go beyond the use of the {\it Poincar\'{e} Map}.
We emphasize that this present study has been mainly motivated by these theoretical aspects.  For more on bifurcations in piecewise smooth systems we may refer to the book \cite{simpson}.

As far as we are concerned, there are no works on nonsmooth physical phenomena for which the corresponding nonsmooth mathematical models exhibit simple two-fold cycles. However, we were able to find an example of a piecewise mechanical system having this kind of cycle (see Section \ref{ppm}). Hence, we hope that the results we have obtained in the present paper may be useful in the future to better understand some real phenomena.

This paper is organized as follows. Section \ref{prel} contains some basic concepts on nonsmooth theory as well as the formal definition of a simple two-fold  cycle (see Figure \ref{fig0}). Our main results are stated in Section \ref{dnc}: Theorem \ref{R2} provides a non-degeneracy condition under which a simple two-fold cycle has codimension-two; and Theorem \ref{T:BD} describes the bifurcation diagram of a simple two-fold cycle provided the previous non-degeneracy condition. Section \ref{proofs} contains some preliminary results needed to prove our main theorems. Sections \ref{proofA} and \ref{proofB} are devoted to prove Theorems \ref{R2} and \ref{T:BD}, respectively. In Section \ref{ppm} we study a 2-parameter family of piecewise Hamiltonian differential systems realizing the bifurcation diagram given by Theorem \ref{T:BD}. Finally, in Section \ref{conclusion} some closing remarks and further directions are provided.

\section{Basic notions on Filippov systems} \label{prel}
In this section we briefly introduce the basic notions on piecewise smooth planar differential systems. For more details see \cite{F,GST}.

Let $U \subset \R^2$ be an open bounded set containing the origin $(0,0)$. Given $r\geq 1$, let $\chi^r$ be the set of all $\CC^r$ vector fields $X: U  \to \R^2$ endowed with the $\C^r-$topology. Consider a  $\CC^1$ function $h:U\rightarrow\R,$ for which $0\in\R$ is a regular value, and let $\Omega^r$ be the space of the following piecewise vector fields\begin{equation}\label{s11}
Z(x,y)=\left\{\begin{array}{l} X(x,y) ~ \textrm{if} ~ h(x,y)>0,\vspace{0.2cm}\\
Y(x,y) ~ \textrm{if}~ h(x,y)<0,
\end{array}\right.
\end{equation} 
where  $X,Y\in \chi^r$,  $(x,y)\in U$  and $\Sigma=h^{-1}(0)$ is the switching manifold.  So $\Omega^r = \chi^r \times \chi^r$ can be endowed with the product topology. Accordingly, we denote $Z=(X,Y)$. When the context is clear we write $\chi^r=\chi$ and $\Omega=\Omega^r$. 

For each $X \in \chi$ we define the smooth function
$Xh: U \to \R$ given by $Xh=X\cdot \nabla h$, where $\cdot$ is the  canonical scalar product in $\R^2$. As usual, for $Z=(X,Y) \in \Omega$, we distinguish three different open regions in $\Sigma$: the {\it sliding region} $\Sigma^s$ (resp. {\it escaping region} $\Sigma^e$) satisfying $Xh(x,y) < 0$ and $Yh(x,y) > 0$ (resp. $Xh(x,y) > 0$ and $Yh(x,y) < 0$ ), and the {\it crossing region} $\Sigma^c$ satisfying $Xh(x,y) Yh(x,y) > 0$. The boundaries of the regions $\Sigma^c$, $\Sigma^s$, and $\Sigma^e$  (i.e. $Xh(x,y)Yh(x,y)  =0$) are constituted by tangency points of $X$ or $Y$ with the switching manifold $\Sigma$.

Here we assume that the solutions of the piecewise smooth differential system $ (x',y') = Z(x,y)$,  for $Z\in \Omega$, obeys Filippov's convention (see \cite{F}). In this case, the piecewise vector field \eqref{s11} is called {\it Filippov vector field}. We recall that when  $p \in \Sigma^s \cup \Sigma^e$ the local trajectory of $Z\in\Omega$ through $p$ follows the trajectory of  the so-called {\it sliding vector field}
\begin{equation}
\label{svfz}
Z^s(p)=\dfrac{Y h(p) X(p)-X h(p) Y(p)}{Y h(p)- Xh(p)}.
\end{equation}
Notice that, for $p\in\Sigma$, $Z^s(p)$ is tangent to $\Sigma$ at $p$. We say that a point $p \in \Sigma^s \cup \Sigma^u$ is a {\it pseudo equilibrium} of $Z$ if it is an equilibrium of $Z^s,$ that is, $Z^s(p)=0$. When $Z^s$ is defined in an open region $V \subset \Sigma^s \cup \Sigma^e$ with boundary $\p V$, it can be $\CC^r$-extended to a full neighborhood of $p$ for all $ p \in \p V$ in $\Sigma$.  

A point $p\in U$ is said to be a singularity of the Filippov vector field \eqref{s11} if either $(i)$ $p$ is a singularity of $X$ or $Y$ (i.e. $X(p)=0$ or $Y(p)=0$), or $(ii)$ $p\in \Sigma^s\cup\Sigma^e$ is a pseudo equilibrium (i.e. $Z^s(p)=0$), or $(iii)$ $p$ is a tangency point (i.e. $p\in \p \Sigma^c\cup\p \Sigma^s\cup \p\Sigma^e$). Otherwise $p$ is called a {\it regular point}. Particularly, in the case $(iii)$, we say that a point $p\in \Sigma$ is a {\it visible fold } for  $X$ (resp. $Y$), if  $Xh(p)=0$ and $X (Xh) (p)> 0$ (resp. $Yh(p)=0$ and $Y (Yh) (p) < 0$).  Analogously, reversing the inequalities, we define an {\it invisible fold}. A point $p\in \Sigma$ is called {\it visible two-fold singularity} (resp. {\it invisible two-fold singularity}) of $Z=(X,Y)$ if it is a visible (resp. invisible) fold for $X$ and $Y,$ simultaneously. In addition, a point $p\in \Sigma$ is called {\it regular-fold singularity} of $Z$  if  it is a fold point for $X$  and a regular point for $Y$, or vice versa.

\subsection{Simple Two-Fold Cycle} 
Let $Z=(X,Y)\in\Omega$ be a piecewise smooth vector field  as defined in \eqref{s11}.  A {\it simple two-fold cycle} is characterized by a trajectory $\gamma(t)$ of $Z$ for which there exists $T>0$ such that $\gamma(0)=\gamma(T)=p$ is a visible two-fold singularity of $Z$, and, for each $0<t<T$, $\gamma(t)$ is a regular point of $Z$ (see Figure \ref{ciclos}).

\begin{figure}[h]
	\subfigure[]
	{\begin{overpic}[width=3.5cm]{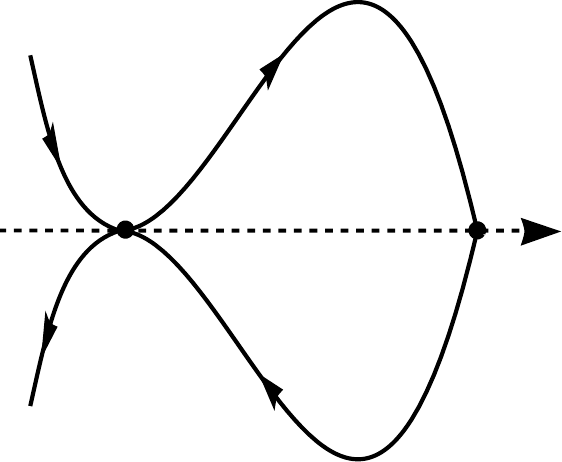}
	\end{overpic}}
	\hspace*{1cm}
	\subfigure[]
	{\begin{overpic}[width=3.4cm]{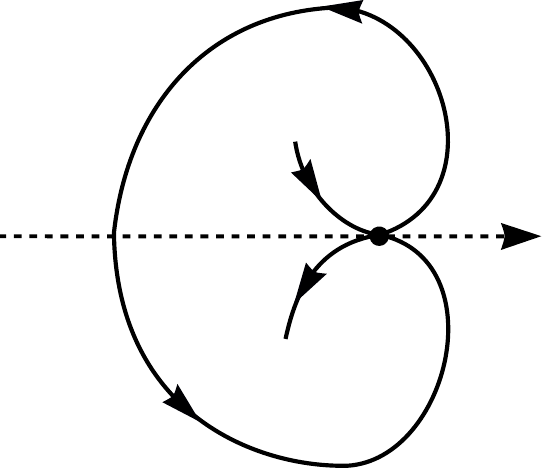}
	\end{overpic}}
	\hspace*{1cm}
	\subfigure[]
	{\begin{overpic}[width=3.5cm]{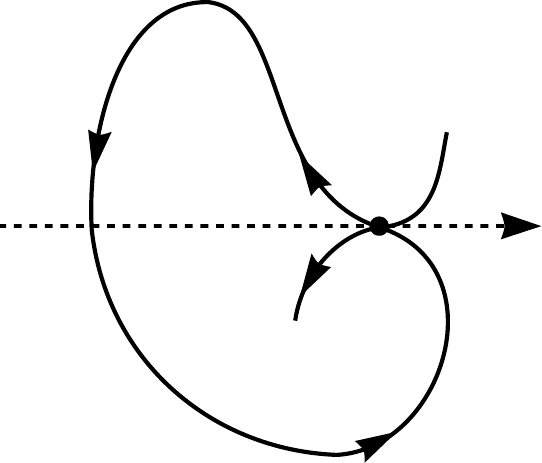}
	\end{overpic}}
	\caption{Examples of simple two-fold cycles.}
	\label{ciclos}	
\end{figure}
 Notice that for the examples illustrated by Figures \ref{ciclos}(a) and \ref{ciclos}(b) a first return map may be defined in a half-closed interval $[p,p+a)$ contained in $\Sigma\cap\textrm{int}(\gamma)$ and $\Sigma\cap\textrm{ext}(\gamma)$, respectively. Nevertheless, the example illustrated by Figure \ref{ciclos}(c) does not admit a non degenerate first return map. Indeed, in this last case, if a first return map is well defined, then it is constant. Accordingly, a simple two-fold cycle, for which a first return map is well defined, is called {\it elementary} whenever the second derivative of its first return map is nonvanishing. This property implies that the orbits lying in a small annulus of $\Gamma$ are not closed.

 In this paper our attention will be focused on the cycle illustrated by Figure \ref{ciclos}(a). The analysis of any other elementary two-fold cycle can be performed in an analogous way. Throughout this paper, without loss of generality, we shall take $h(x,y)=y$. Denote by $X^1$, $X^2$ and $Y^1$, $Y^2$ the coordinates of $X$ and $Y$, respectively.  In order to characterize this simple two-fold cycle we need to assume the following condition:

\begin{itemize}
	\item[$(C)$] The vector field $Z =(X , Y )$ has a visible two-fold point at $(0,0)\in\Sigma$ such that $X^1(0,0)>0$ and $Y^1(0,0)<0$. The trajectory of  $X $ (resp. $Y$) passing through $(0,0)$ meets $\Sigma$ transversally at $(q_{X },0)$ (resp. $(q_{Y },0)$) forward in time (resp. backward in time), where $q_{X }=q_{Y }>0$. 
\end{itemize}	

Note that the orientation of the trajectories of $X $ and $Y $ are fixed by $(C)$. More precisely, the trajectory of $X $ 
 goes to the right and the  trajectory of $Y $ goes to
the left, so that system \eqref{s11} admits a cycle $\Gamma$, which is characterized by the union $\Gamma=\Gamma_X\cup \Gamma_Y\cup\{(0,0)\}$, where 
\begin{equation*}
\label{Ga}
\Gamma_{X} =\{\varphi_{X }(t,0,0), ~0 < t \leq T^+ \} ~ \text{and} ~ 
\Gamma_{Y} =\{\varphi_{Y }(t,0,0), ~T^-\leq t < 0 \}.
\end{equation*}
Here $\varphi_{X }(t,x,y)$ and $\varphi_{Y }(t,x,y)$  are the trajectories of $X $ and $Y $, respectively, satisfying $\varphi_{X }(T^+,0,0)=(q_{X },0)$ and $\varphi_{Y }(T^-,0,0)=(q_{Y },0)$ (see Figure \ref{fig0}). 

For $\delta>0$ sufficiently small and $\sigma=\{(x,0):0<x<\delta\}\subset\Sigma$,  it is well defined a {\it displacement function} $f_{Z}:\sigma\rightarrow\Sigma$ associated with $Z$, defined by $f_{Z}(x)=\varphi_{X}(T_1(x),x,0) -\varphi_{Y}(T_2(x),x,0)$ where $T_1(x)>0$ is the smallest positive time such that $\varphi_{X}(T_1,x,0)\in\Sigma$, and $T_2(x)<0$ is the biggest negative time such that  $\varphi_{Y}(T_2(x),x,0) \in \Sigma $ (see Figure \ref{fig1}).  In Section \ref{subdisp} (see Proposition \ref{itemi}), we shall prove that 
\begin{equation}\label{fZ}
f_{Z}(x) = M x^2 + \CO(x^3), ~ M \in \R.
\end{equation}
We recall that the notation $u(x)=\CO(v(x))$ means that there exist constants $d>0$ and $K>0$ such that $|u(x)|<K |v(x)|$ whenever $|x|<d$. Notice that the cycle $\Gamma$ is {\it elementary} provided that $M\neq 0$. Moreover, if $M>0$ the cycle $\Gamma$ is stable, and if $M<0$ the cycle $\Gamma$ is unstable. 

\begin{figure}[h]
	\begin{center}
		\begin{overpic}[width=7cm]{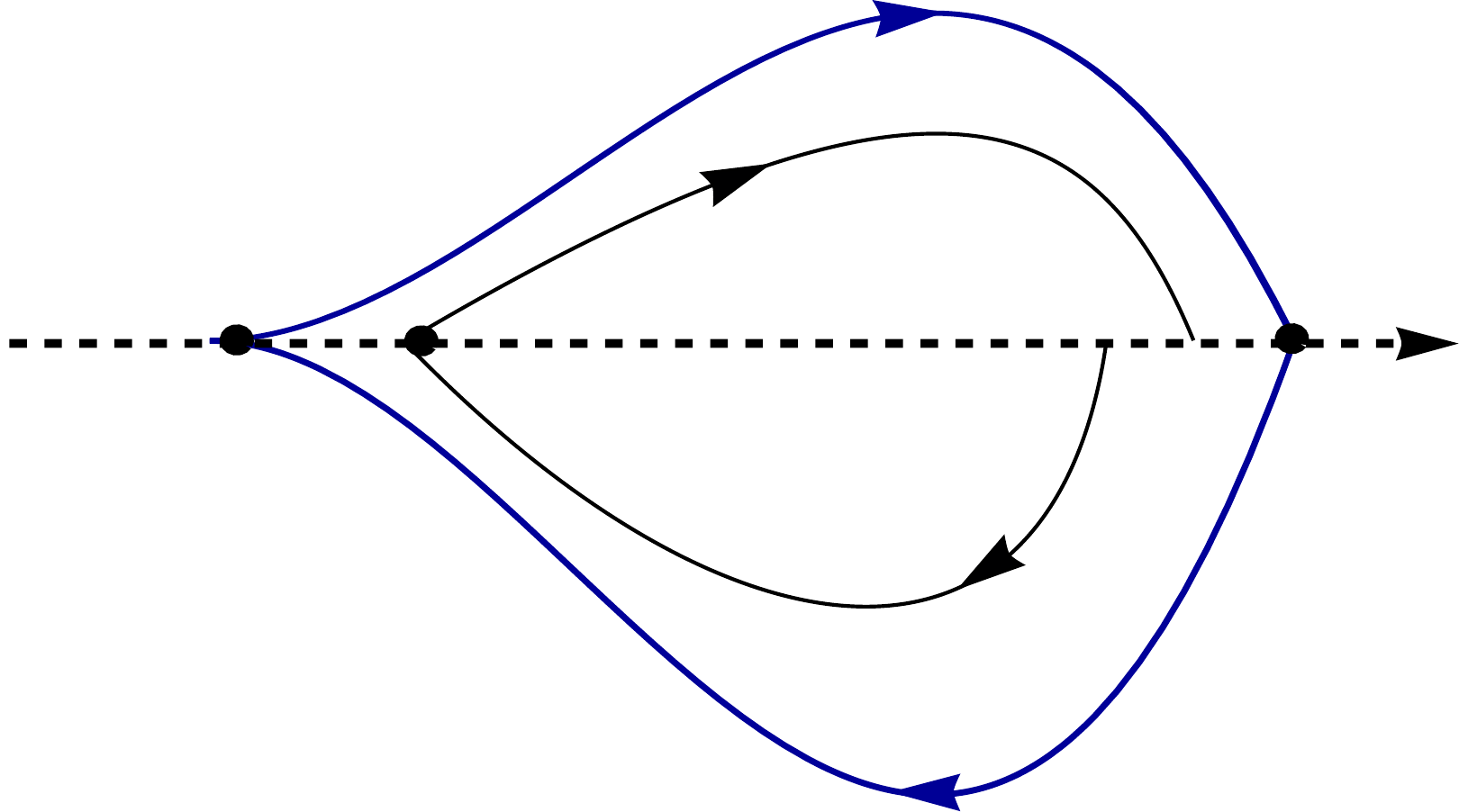}
			\put(2,33){$\Sigma$} 
			\put(14.1,25){$0$}
			\put(39,49){$\Gamma$}
			\put(89,37){$q_{X}$}
			\put(89,25){$q_{Y}$}
		\end{overpic}
	\end{center}
	\caption{Elementary simple two-fold cycle $\Gamma$  connecting the visible two-fold $(0,0)$ to itself and reaching $\Sigma$  transversally at $(q_{X},0)=(q_{Y},0)$. The elementary property implies that the orbits lying in a small annulus of $\Gamma$ are not closed.}
	\label{fig1}
\end{figure}

\section{Main results}\label{dnc}

Our main goal in this paper is to understand what typically happens when an elementary simple two-fold cycle is perturbed on $\Omega$. More precisely, let $\Gamma_0$ be a simple two-fold cycle of a Filippov vector field $Z_0=(X_0,Y_0)\in\Omega$ characterized by condition $(C),$ and let $\mathcal{A}_0\subset\R^2$ be a sufficiently small annulus around $\Gamma_0.$ Considering Filippov systems lying in a small neighborhood $\V_0\subset\Omega$ of $Z_0$ we may ask how their phase spaces look like on $\mathcal{A}_0$ (see Figure \ref{fig2}). A complete characterization of  these systems will be given by Theorems \ref{R2} and \ref{T:BD}, assuming that $\Gamma_0$ is elementary.

\begin{figure}[h]
	\begin{center}
		\begin{overpic}[width=7.4cm]{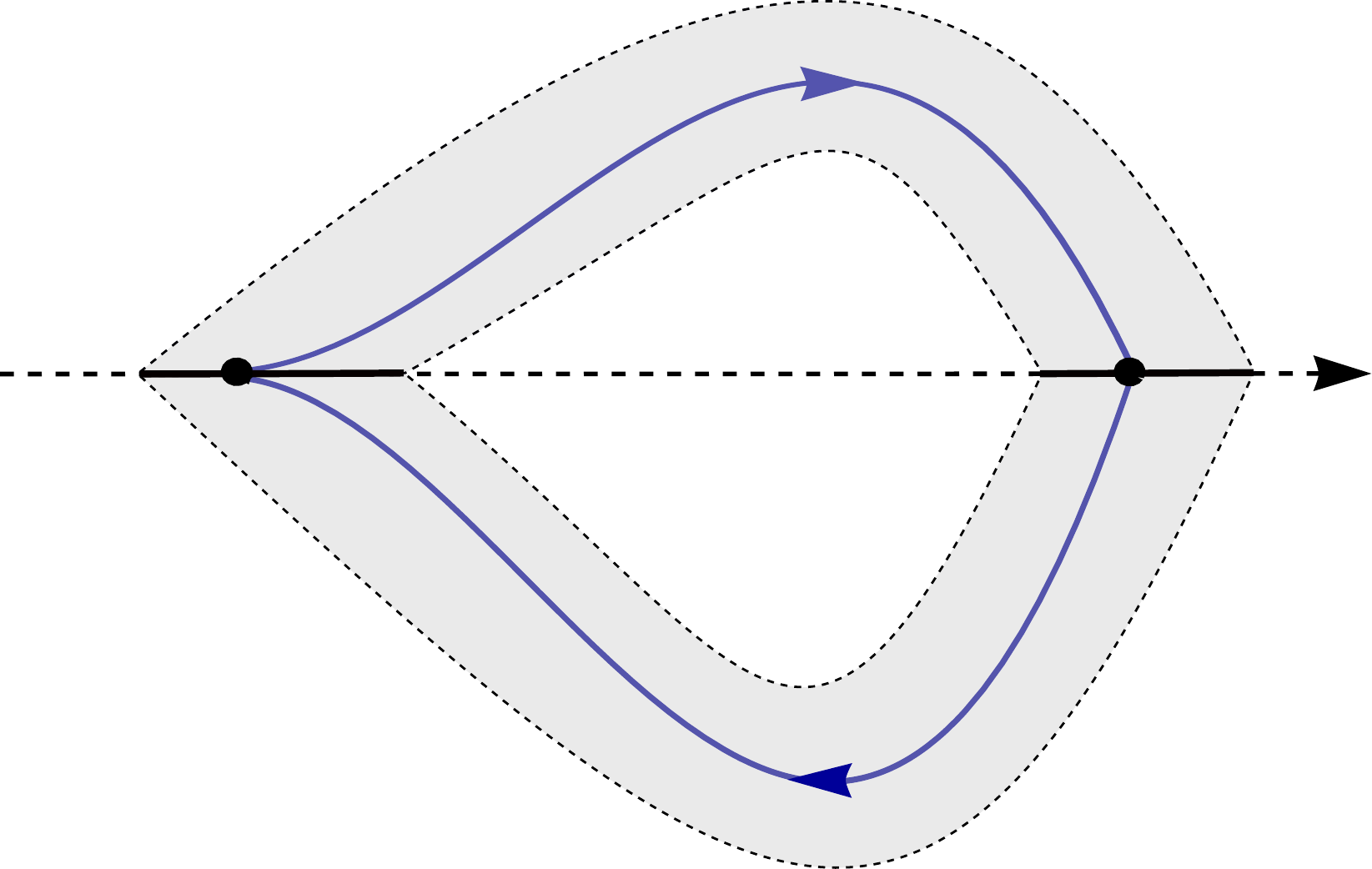}
			\put(2,37){$\Sigma$} 
			\put(29.5,56.7){$\mathcal{A}_0$}
			\begin{small}
				\put(16.1,30.7){$0$} 
				\put(82.5,39){$q_{X_0}$}
				\put(82.5,32){$q_{Y_0}$}
			\end{small}
		\end{overpic}
	\end{center}
	\caption{Small annulus $\mathcal{A}_0 \subset \R^2$ around the simple two-fold cycle $\Gamma_0$.}
	\label{fig2}
\end{figure}

\subsection{Perturbation of a simple two-fold cycle}
Assume that $\Gamma_0$ is a simple two-fold cycle of $Z_0\in\Omega$ characterized by condition $(C)$. Given an annulus $\mathcal{A}_0\subset\R^2$ around $\Gamma_0$, there exists a neighborhood $\V_0\subset\Omega$ of $Z_0$ such that each $Z=(X,Y)\in\V_0$ admits a fold point $(p_{X},0)\in \mathcal{A}_0$ of $X$, and a fold point $(p_{Y}, 0)\in \mathcal{A}_0$ of $Y$. Indeed, $X_0$ and $Y_0$ are $C^r$ vector fields, with $r\geq 1$, and the neighborhood $\V_0$ can be taken in $\chi^r \times \chi^r$ as a Cartesian product of neighborhoods in $\chi^r$ of $X_0$ and $Y_0$, respectively. So, that the above statement follows  from the continuous dependence of solutions on initial conditions and parameters.

In this case, denoting by $X^1,X^2$ and $Y^1,Y^2$ the coordinates of $X$ and  $Y$, respectively, we have
\begin{equation}
\label{p1}
\begin{array}{lll}
X^2(p_X,0)=0, & \dfrac{\p X^2}{\p x }(p_X,0)> 0, &  X^1(p_X,0)>0,\vspace{0.2cm}\\
Y^2(p_{Y},0)=0, & \dfrac{\p Y^2}{\p x }(p_{Y},0)> 0, & Y^1(p_{Y},0) < 0.
\end{array}
\end{equation}
Furthermore,  condition $(C)$ implies that the neighborhood $\V_0$ can be chosen such that: 
\begin{itemize}
\item[$(i)$] the trajectory of  $X$, forward in time, starting at $(p_{X},0)$ meets $\Sigma$ transversally at $(q_{X},0),$ that is,  $X^2(q_{X},0) \neq 0$; 
\item[$(ii)$] the trajectory of  $Y$, backward in time, starting at $(p_Y,0)$ meets $\Sigma$ transversally at $(q_{Y},0),$ that is, $Y^2(q_{Y},0) \neq 0$;
\item[$(iii)$] the trajectory of $X$ connecting $(p_X,0)$ to $(q_X,0)$ and the trajectory of $Y$ connecting $(p_Y,0)$ to $(q_Y,0)$ are both contained in $\mathcal{A}_0$.
\end{itemize}

Without loss of generality we assume that $p_{X}=0$ for all $Z \in \V_0$ $($see Figure \ref{psystem}$)$.  
\begin{figure}[h] 
	\begin{center}
\begin{overpic}[width=8.5cm]{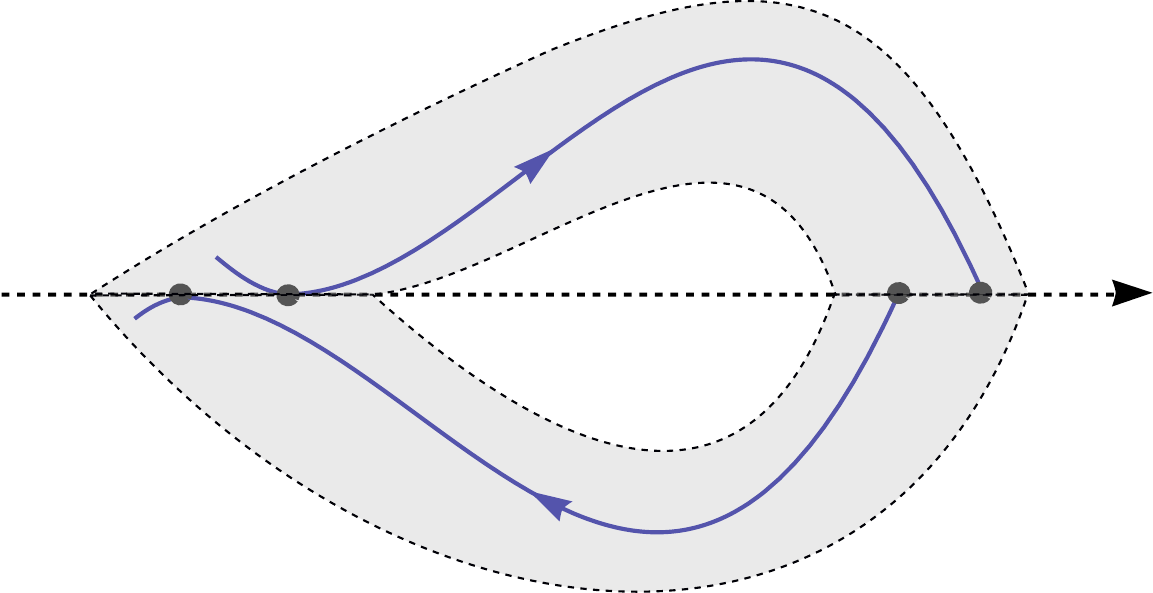}
				\begin{small}
			\put(2,27){$\Sigma$}
			\put(15,21.5){$p_{Y}$}
			\put(24,27.2){$0$}
			\put(76,27.9){$q_{Y}$}
			\put(81.8,21.5){$q_{X}$}
			\end{small}
		\end{overpic}
	\end{center}
	\caption{Trajectories of $X$ and $Y$ through $(0,0)$ and $(p_{Y},0)$, respectively.}
	\label{psystem}
\end{figure}

The above comments allow us to define a $\C^r$ function $\eta: \V_0 \to \R^2$ by 
\begin{equation}
\label{bifun}
\eta(Z)=(p_{Y},q_{X} - q_{Y}). 
\end{equation} 
Note that $\eta(Z_0)=(0,0)$.  Our first main result states that the function $\eta$ is a submersion, that is, the derivative $d\eta(Z):\Omega^r\rightarrow \R^2$ is a surjective linear map for every $Z\in\V_0$. It implies that $\eta^{-1}(0,0)$ is a codimension-two submanifold of $\V_0$ (see \cite{L}).

\begin{mtheorem}
\label{R2} 
Let $\Gamma_0$ be an elementary simple two-fold cycle of a vector field $Z_0 \in \Omega$ characterized by condition $(C)$. Then there exist an annulus $\mathcal{A}_0\subset\R^2$ around $\Gamma_0$ and a neighborhood $\V_0\subset\Omega$ of $Z_0$  for which the following statements hold: $(i)$ for $Z\in \V_0$, $\eta(Z)=(0,0)$ if and only if $Z$ has an elementary simple two-fold cycle in $\mathcal{A}_0$, and $(ii)$ $d\eta(Z)$ is surjective for each $Z\in \V_0$.  
\end{mtheorem}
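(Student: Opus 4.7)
The plan is to treat the two statements in order, reducing both to (a) continuous dependence of the fold and first-intersection points on $Z$ (the content of the discussion preceding the statement) and (b) first-variation computations along $\Gamma_X$ and $\Gamma_Y$.

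For part $(i)$, first shrink $\V_0$ if necessary so that, by continuous dependence of solutions on parameters, the data $p_Y(Z)$, $q_X(Z)$, $q_Y(Z)$ are well-defined and lie in $\mathcal{A}_0$ for every $Z\in\V_0$. If $\eta(Z)=(0,0)$ then $p_Y=0$ makes the folds of $X$ and $Y$ coincide at the origin, producing a visible two-fold of $Z$ there, and $q_X=q_Y$ forces the forward $X$-arc from $(0,0)$ and the backward $Y$-arc from $(0,0)$ to meet at the common point $(q_X,0)\in\Sigma$, closing up into a simple two-fold cycle contained in $\mathcal{A}_0$ by condition $(iii)$ of the setup. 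The converse is immediate from the uniqueness of the fold data in $\mathcal{A}_0$. To upgrade the cycle from simple to elementary, appeal to Proposition \ref{itemi}: the coefficient $M(Z)$ in \eqref{fZ} depends continuously on $Z$, so the hypothesis $M(Z_0)\neq 0$ persists on a neighborhood of $Z_0$, and after shrinking $\V_0$ once more the cycle is elementary for every $Z\in \eta^{-1}(0,0)\cap\V_0$.

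For part $(ii)$, the aim is to exhibit two vectors $W_1,W_2\in\Omega^r$ such that the images $d\eta(Z)W_1$ and $d\eta(Z)W_2$ are linearly independent. Take $W_1=(\widetilde X,0)$ with $\widetilde X\in\chi^r$ supported in a small ball around an interior point of $\Gamma_X$, disjoint from $\{(0,0),(q_X,0)\}$. Such a perturbation leaves $Y$ untouched (so $p_Y$ and $q_Y$ are unperturbed to first order) and fixes the fold at $(0,0)$ (since $\widetilde X$ vanishes there); a standard first-variation argument along $\Gamma_X$, combined with the transversality $X^2(q_X,0)\neq 0$, lets us pick $\widetilde X$ so that $q_X$ is shifted by a nonzero amount, giving $d\eta(Z)W_1=(0,c_1)$ with $c_1\neq 0$. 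For the second direction, take $W_2=\bigl(0,(0,\psi)\bigr)$ with $\psi\in\C^r$ a bump satisfying $\psi(p_Y,0)\neq 0$ and localized near $(p_Y,0)$. Applying the implicit function theorem to the perturbed fold equation $Y^2(x,0)+\e\psi(x,0)=0$ and using $(\p Y^2/\p x)(p_Y,0)>0$ from \eqref{p1}, we get $dp_Y/d\e|_{\e=0}\neq 0$, so $d\eta(Z)W_2=(c_2,c_3)$ with $c_2\neq 0$. The two images then span $\R^2$, establishing surjectivity.

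The main obstacle is the first variation of $q_X$ under $\widetilde X$: one has to integrate the variational equation of $X$ along $\Gamma_X$ up to the transverse intersection at $(q_X,0)$ and argue that a suitable constant-direction bump $\widetilde X=\xi\,\varphi$ makes the resulting linear functional for $\delta q_X$ nonvanishing for some choice of $\xi\in\R^2$. This is routine Poincar\'e-map bookkeeping carried out at $Z_0$, and the surjectivity then propagates to all of $\V_0$ by the openness of ``linear independence of two vectors''.
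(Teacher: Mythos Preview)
Your argument for part $(i)$ is essentially identical to the paper's: both directions follow from the definition of $\eta$ and condition $(C)$, and elementarity is obtained by invoking Proposition~\ref{itemi} together with the continuity of the second-order coefficient $M(Z)=\ell(Z)-k(Z)$ and shrinking $\V_0$ so that it stays nonzero.

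For part $(ii)$ you take a genuinely different, and more explicit, route. The paper's proof simply \emph{declares} the existence of a smooth curve $\mathcal{Z}(\lambda)$ in $\Omega$ through $Z$ with $\alpha(\mathcal{Z}(\lambda))=\alpha(Z)+\lambda u$ and $\beta(\mathcal{Z}(\lambda))=\beta(Z)+\lambda v$, and then reads off $d\eta(Z)\cdot\mathcal{Z}'(0)=(u,v)$. This is slick but leaves the construction of such a curve implicit. Your approach fills exactly that gap: you build two concrete tangent vectors $W_1=(\widetilde X,0)$ and $W_2=(0,(0,\psi))$, chosen so that $W_1$ moves only $q_X$ (via a first-variation argument along $\Gamma_X$ with transversality at $(q_X,0)$) and $W_2$ moves $p_Y$ (via the implicit function theorem applied to the fold equation, using $\partial Y^2/\partial x(p_Y,0)>0$). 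The resulting images $(0,c_1)$ and $(c_2,c_3)$ with $c_1,c_2\neq 0$ span $\R^2$. Your choice of supports also respects the normalization $p_X=0$, since neither perturbation touches the fold of $X$. In short, the paper's argument is shorter because it takes the existence of the required curves for granted; yours supplies the missing construction and is the more self-contained of the two.
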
 
Theorem \ref{R2} is proved in Section \ref{proofA}. 

\subsection{Bifurcation Diagram}

Theorem \ref{R2} implies that an elementary simple two-fold cycle $\Gamma_0$ persists in a codimension-two submanifold of $\V_0$. Furthermore, since $\eta$ is a submersion, the cycle $\Gamma_0$ can be unfolded using the function $\eta$ so that all the bifurcations occurring in a small annulus of $\Gamma_0$ can be detected.

Accordingly, let $U=\eta(\V_0)\subset\R^2$ and  $(\al,\beta)=\eta(Z)$ for $Z\in \V_0$. As the parameter $\alpha$ varies both folds move apart (see Figure \ref{psystem}) creating a sliding region between them, which contains a pseudo equilibrium. Meanwhile, as the parameter $\beta$ varies several bifurcations may occur depending on the sign of $\al$.  For the sake of simplicity, let us assume that $\Gamma_0$ is stable. 

For $\al>0$, in addition to the curve $\be=0$, we may find other curves of codimension-one bifurcations:
\begin{equation}\label{curvecon1}
\be_1(\al)>\be_2(\al)>\be_4(\al)>0\quad \text{with} \quad \be_4(0)=\be_2(0)=\be_1(0)=0.
\end{equation}  
We shall see that for $\be=0$ the vector field  $Z$ has a connection between visible regular-fold singularities. The curves $\beta=\beta_1(\alpha)$ and  $\beta=\beta_2(\alpha)$  represent a saddle-node bifurcation curve and a unstable critical crossing bifurcation curve (see \cite{FPT}), respectively. Finally, for $\beta=\beta_4(\alpha)$ the vector field  $Z$ has a connection between a stable pseudo equilibrium and a regular-fold singularity.
	
On the other hand, for $\alpha<0$, in addition to the curve $\be=0$, we may also find other curves of codimension-one bifurcations:
\begin{equation}\label{curvecon2}
0<\beta_3(\alpha)<\beta_5(\al) \quad \text{with} \quad \beta_3(0)=\beta_5(0)=0.
\end{equation}   
We shall see that for $\be=0$ the vector field  $Z$ still has a connection between visible regular-fold singularities. The curve $\beta=\beta_3(\alpha)$ is a stable critical crossing bifurcation curve, and for $\beta=\beta_5(\alpha)$  the vector field  $Z$ has a connection between a unstable pseudo equilibrium and a regular-fold singularity.

From Theorem $A$, the set $U$ is an open neighborhood of $(0,0)$. In what follows we define some regions (see Figure \ref{spacepar}):
\begin{equation}
\begin{array}{l}
\label{regions}
R_1=\{(\alpha,\beta)\in U: \alpha<0~\text{and}~ \beta < \beta_3(\al) \};\\
R_2=\{(\alpha,\beta)\in U: \alpha>0~\text{and}~\beta < 0 \};\\
R_3=\{(\alpha,\beta)\in U: \alpha>0~\text{and}~0<\beta< \beta_4(\al) \};\\
R_4=\{(\alpha,\beta)\in U: \alpha>0~\text{and}~\beta_4(\al)<\beta< \beta_2(\al)\};\\
R_5=\{(\alpha,\beta)\in U: \alpha>0~\text{and}~\beta_2(\al)<\beta< \beta_1(\al)\};\\
R_6=\{(\alpha,\beta)\in U: \alpha>0~\text{and}~ \beta > \beta_1(\al) \};\\
R_7=\{(\alpha,\beta)\in U: \alpha<0~\text{and}~\beta>0\};\\
R_8=\{(\alpha,\beta)\in U: \alpha<0~\text{and}~\beta_5(\al)<\beta< 0 \};\\
R_9=\{(\alpha,\beta)\in U: \alpha<0~\text{and}~\beta_3(\al)<\beta< \beta_5(\al)\}.
\end{array}
\end{equation}
For the sake of convenience we denote by $B_{ij}=(\p R_i\cap\p R_j)\setminus\{(0,0)\}$ the boundary between $R_i$ and $R_j$, for $i,j\in\{1,2,\ldots,9\}$. In this case, the curves $\beta=\beta_i(\alpha)$, for $i \in \{ 1, 2, \ldots, 5\}$,  correspond to $B_{56}$, $B_{45}$, $B_{91}$, $B_{34}$, and $B_{89}$, respectively (see Figure \ref{spacepar}).

\begin{figure}[h] 
	\begin{center}
	\begin{overpic}[width=6cm]{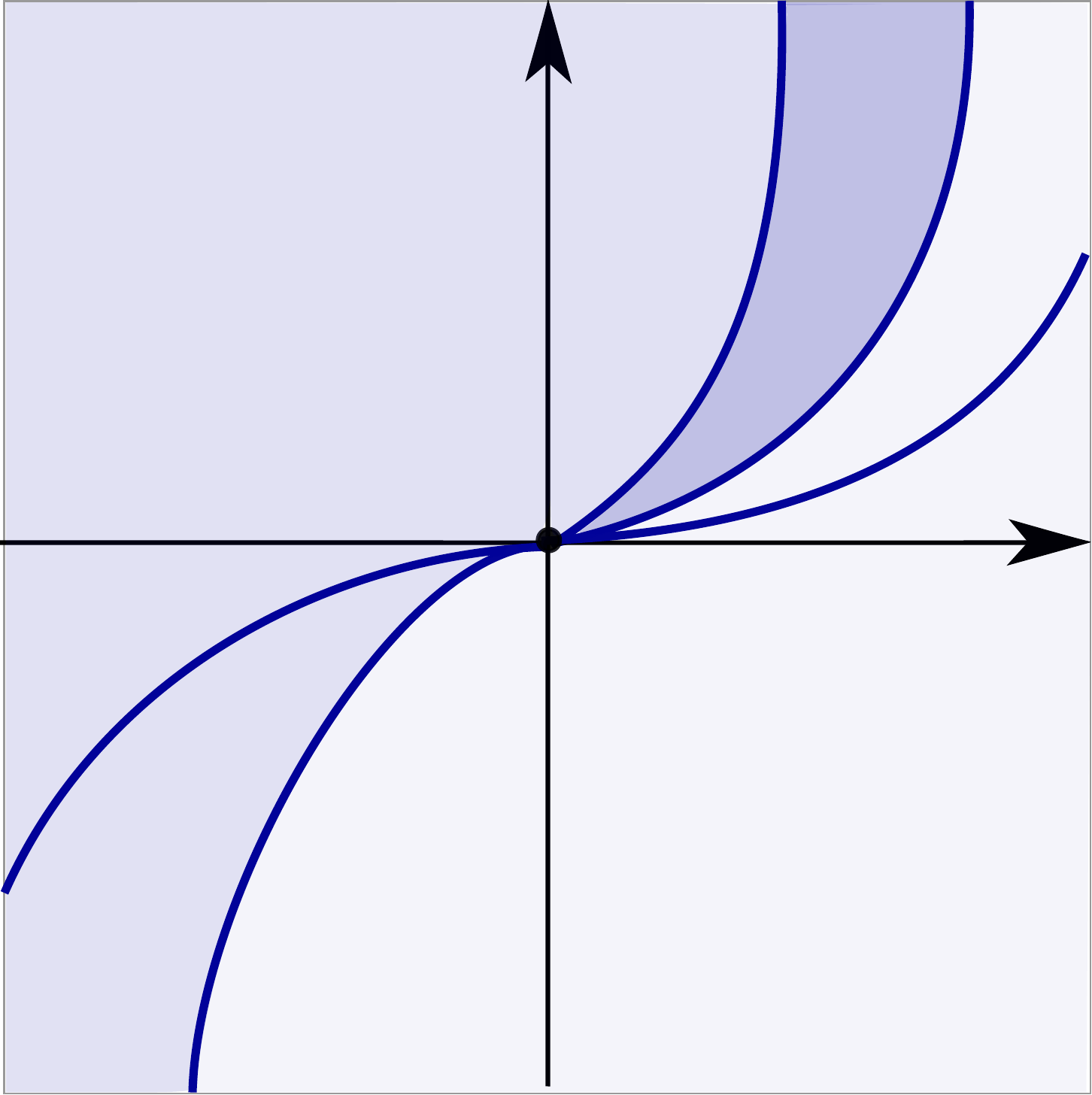}
			\begin{footnotesize}
				\put(95,44){$\alpha$}
				\put(53,94){$\beta$}
				\put(67,103){$ B_{56}$}
				\put(84,103){$ B_{45}$}
				\put(101,75){$ B_{34}$}
				\put(14,-6){$ B_{91}$}
				\put(45,103){$ B_{67}$}
				\put(101,48){$ B_{23}$}
				\put(-12,48){$ B_{78}$}
				\put(-12,17){$ B_{89}$}
				\put(46,-6){$ B_{12}$}
                \put(35,18){$ R_1$}
				\put(73,18){$ R_2$}
                \put(85,54){$ R_3$}
				\put(88,78){$ R_4$}
				\put(73,78){$ R_5$}
                \put(55,78){$ R_6$}
				\put(20,75){$ R_7$}
           		\put(6,42){$ R_8$}
				\put(13,27){$R_9$}
				\begin{footnotesize}
				\put(72.3,94){$  {\beta_1}$}
				\put(89.5,94){$ \beta_2$}
				\put(94,65){$ \beta_4$}
				\put(19,2){$ \beta_3$}
				\put(2,17){$ \beta_5$}
				\end{footnotesize}
			\end{footnotesize}
		\end{overpic}
	\end{center}
	\caption{Space of parameters $(\alpha,\beta)$ and codimension-one bifurcation curves. Here $\beta_i$ represents the curves $\beta=\beta_i(\alpha)$ for $i\in \{ 1,2 \ldots, 5\}$.} 
	\label{spacepar}
\end{figure}

Our second main result describes completely the behavior of  vector fields $Z\in\Omega$ nearby $Z_0$, restricted to a small annulus of $\Gamma_0$. It is a descriptive version of the bifurcation digram illustrated in Figure \ref{stable}. 

\begin{mtheorem}\label{T:BD} Assume that $\Gamma_0$ is a stable elementary simple two-fold cycle of a vector field $Z_0 \in \Omega$. Let $\mathcal{A}_0\subset\R^2$  and $\V_0\subset\Omega$ be the neighborhoods of $\Gamma_0$ and $Z_0$, respectively, given by Theorem \ref{R2}. Then there exist an annulus $\mathcal{A}_1\subset\mathcal{A}_0$ of $\Gamma_0$, a neighborhood $\V_1\subset\V_0$ of $Z_0$, and curves $\beta_i(\al)$, $i\in\{1,2,\ldots,5\}$ satisfying \eqref{curvecon1} and \eqref{curvecon2} such that,  for $Z\in\V_1$ and $(\al,\beta)=\eta(Z)$, the following possibilities for $Z|_{\mathcal{A}_1}$ hold:
\begin{itemize}
	\item [$(a)$]  for $(\alpha,\beta) \in R_1$, there exist two regular-fold points, $(0,0)$ and $(\al,0)$, a unstable pseudo equilibrium $(p^u,0)$, and a stable crossing cycle $\gamma_1$;
	\item [$(b)$]  for $(\alpha,\beta) \in B_{12}$, there exist a two-fold point at $(0,0)$ and a stable crossing cycle $\gamma_1$, with  $(0,0)\in\textrm{ext}(\gamma_1)$;
	\item [$(c)$]  for $(\alpha,\beta) \in R_2$, there exist two regular-fold points, $(0,0)$ and $(\al,0)$, a stable pseudo equilibrium $(p^s,0)$, and a stable crossing cycle $\gamma_1$;
	\item [$(d)$]  for  $(\alpha,\beta) \in B_{23}$,  there exist two regular-fold points, $(0,0)$ and $(\al,0)$, a stable pseudo equilibrium $(p^s,0)$, a stable crossing cycle $\gamma_1$, and a connection between $(0,0)$ and $(\al,0)$;	
	\item [$(e)$]  for  $(\alpha,\beta) \in R_{3}$,  there exist two regular-fold points $(0,0)$ and $(\al,0)$, a stable pseudo equilibrium $(p^s,0)$, a stable crossing cycle $\gamma_1$, and a sliding connection between $(0,0)$ and $(\al,0)$ contained in $\textrm{ext}(\gamma_1)$;
	\item [$(f)$] for  $(\alpha,\beta) \in B_{34}$,  there exist two regular-fold points, $(0,0)$ and $(\al,0)$, a stable pseudo equilibrium $(p^s,0)$, a stable crossing cycle $\gamma_1$, and a connection between $(p^s,0)$ and $(\al,0)$ contained in $\textrm{ext}(\gamma_1)$;
	\item [$(g)$]  for  $(\alpha,\beta) \in R_4$,  there exist two regular-fold points, $(0,0)$ and $(\al,0)$, a stable pseudo equilibrium $(p^s,0)$, a stable crossing cycle $\gamma_1$, and a unstable sliding cycle contained in $\textrm{ext}(\gamma_1)$ and passing through $(\alpha,0)$.
	\item [$(h)$]  for $(\alpha,\beta) \in B_{45}$, there exist two regular-fold points, $(0,0)$ and $(\al,0)$, a stable pseudo equilibrium $(p^s,0)$, a stable crossing cycle $\gamma_1$, and a unstable critical crossing cycle contained in $\textrm{ext}(\gamma_1)$ and passing through $(\alpha,0)$;	
	\item [$(i)$]  for $(\alpha,\beta) \in  R_5$, there exist two regular-fold points, $(0,0)$ and $(\al,0)$, a stable pseudo equilibrium $(p^s,0)$,  a stable crossing cycle $\gamma_1$, and a unstable crossing cycle $\gamma_2$ contained in $\textrm{ext}(\gamma_1)$;	
	\item [$(j)$]  for $(\alpha,\beta) \in  B_{56}$, there exist two regular-fold points, $(0,0)$ and $(\al,0)$, a stable pseudo equilibrium $(p^s,0)$, and a semi-stable crossing cycle $\gamma_3$, which attracts the orbits contained in $\textrm{int}(\gamma_3)\cap\mathcal{A}_{1}$ and repels the orbits contained in $\textrm{ext}(\gamma_3)\cap\mathcal{A}_{1}$;	
	\item [$(k)$]  for $(\alpha,\beta) \in  R_6$, there exist two regular-fold points, $(0,0)$ and $(\al,0)$, a stable pseudo equilibrium $(p^s,0)$, and there is no cycle;
   	\item [$(l)$]  for $(\alpha,\beta) \in  B_{67}$, there exists a two-fold point at $(0,0)$.  
   	\item [$(m)$]  for $(\alpha,\beta) \in R_7$, there exist two regular-fold points, $(0,0)$ and $(\al,0)$, and a unstable pseudo equilibrium $(p^u,0)$.
	\item [$(n)$]  for $(\alpha,\beta) \in  B_{78}$, there exist two regular-fold points, $(0,0)$ and $(\al,0)$, a unstable pseudo equilibrium $(p^u,0)$, and a connection between $(0,0)$ and $(\al,0)$;	
	\item [$(o)$]  for $(\alpha,\beta) \in R_8$, there exist two regular-fold points $(0,0)$ and $(\al,0)$, a unstable pseudo equilibrium $(p^u,0)$, and a sliding connection between $(0,0)$ and $(\al,0)$;
	\item [$(p)$]  for $(\alpha,\beta) \in  B_{89}$,  there exist two regular-fold points, $(0,0)$ and $(\al,0)$, a unstable pseudo equilibrium $(p^u,0)$, and a connection between  $(0,0)$ and $(p^u,0)$;
	\item [$(q)$]  for  $(\alpha,\beta) \in R_9$,  there exist two regular-fold points, $(0,0)$ and $(\al,0)$, a unstable pseudo equilibrium $(p^u,0)$, and a stable sliding cycle passing through $(0,0)$.
	\item [$(r)$]  for $(\alpha,\beta) \in B_{91}$, there exist two regular-fold points, $(0,0)$ and $(\al,0)$, a unstable pseudo equilibrium $(p^u,0)$, and a stable critical crossing cycle passing through $(0,0)$;
\end{itemize}
Moreover,
\[
\beta_1 (\alpha)= \frac{k \ell}{L} \alpha^2 + \CO(\alpha^3),~ \beta_2(\alpha)=- \ell \alpha^2+\CO(\alpha^3),~\text{and} ~\beta_3(\alpha)=k \alpha^2+\CO(\alpha^3).
 \]
\end{mtheorem}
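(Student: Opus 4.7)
The plan is to take Theorem~\ref{R2} as given and use $(\alpha,\beta)=\eta(Z)$ as a versal two-parameter unfolding of $\Gamma_0$. Inside $\mathcal{A}_1$ only four kinds of invariant objects can occur: the two fold points $(0,0)$ and $(\alpha,0)$, a unique pseudo-equilibrium on $\Sigma$, at most two cycles in $\textrm{ext}(\Gamma_0)$ (crossing or sliding), and various heteroclinic connections linking these. The five bifurcation curves $\beta_i(\alpha)$ will then be the vanishing loci of five distinguished scalar functions on the parameter plane, and each phase portrait in $(a)$–$(r)$ should follow from the signs of these functions.

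The first technical step is to extend the displacement function \eqref{fZ} to the perturbed system as a $\CC^r$ function $f_{(\alpha,\beta)}$ on a small interval of $\Sigma\cap\textrm{ext}(\Gamma_0)$ lying to the right of both fold points, by following the $X$-orbit forward and the $Y$-orbit backward until they first return to $\Sigma$. A careful Taylor expansion near the old two-fold, using $p_Y=\alpha$, $q_X-q_Y=\beta$ and Proposition~\ref{itemi}, should yield
\begin{equation*}
f_{(\alpha,\beta)}(x)=Mx^2-\beta+A\alpha x+B\alpha^2+\CO\bigl(|x|^3+\alpha^2|x|+|\alpha|^3+|\beta|(|x|+|\alpha|+|\beta|)\bigr),
\end{equation*}
with $M>0$ by stability of $\Gamma_0$, and with explicit constants $A,B$ read off from the transit-time and return-point derivatives that define the constants $k,\ell,L$ of the statement. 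Weierstrass preparation factors this as $M(x-r_1(\alpha,\beta))(x-r_2(\alpha,\beta))$ with $\CC^r$ roots; setting the discriminant to zero gives the saddle-node curve $\beta_1(\alpha)=\frac{k\ell}{L}\alpha^2+\CO(\alpha^3)$, imposing $r_2(\alpha,\beta)=\alpha$ (the unstable root reaches the $Y$-fold) gives $\beta_2(\alpha)=-\ell\alpha^2+\CO(\alpha^3)$, and, for $\alpha<0$, requiring a root of $f_{(\alpha,\beta)}$ to reach $0$ gives $\beta_3(\alpha)=k\alpha^2+\CO(\alpha^3)$. The signs of $r_1,r_2$ and of $f_{(\alpha,\beta)}$ between them determine the existence and stability of the cycles $\gamma_1,\gamma_2,\gamma_3$ in each region.

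For the sliding side, the open segment between the two folds is escaping for $\alpha>0$ and sliding for $\alpha<0$; evaluating $Z^s$ at each endpoint shows it is transverse to the boundary and points outward (respectively, inward), so the implicit function theorem supplies a unique pseudo-equilibrium $(p(\alpha,\beta),0)$ in the interior whose type is fixed by the sign of $(Z^s)'$. The remaining two curves $\beta_4,\beta_5$ are graphs arising from connection conditions: $\beta_4(\alpha)$ (for $\alpha>0$) is where the $Y$-trajectory starting backward at $(\alpha,0)$ meets $\Sigma$ exactly at $p^s$, and $\beta_5(\alpha)$ (for $\alpha<0$) is the analogous condition relating the $X$-trajectory from $(0,0)$ to $p^u$; both follow from the implicit function theorem applied to the relevant transit map. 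I would then verify cases $(a)$–$(r)$ region by region by counting the zeros of $f_{(\alpha,\beta)}$ in $\sigma$, locating the pseudo-equilibrium, and noting which of $\beta_4,\beta_5$ has been crossed. The principal obstacle I anticipate is \emph{exhaustiveness}: ruling out any extra recurrent set in $\mathcal{A}_1$ will require shrinking $\mathcal{A}_1\subset\mathcal{A}_0$ and $\V_1\subset\V_0$ uniformly in $(\alpha,\beta)$ so that $f_{(\alpha,\beta)}$ stays strictly monotone outside $\{r_1,r_2\}$ and the sliding flow on the fold-to-fold segment is a single monotone orbit; this uniform control, together with careful bookkeeping of the sign of each $\beta_i$, is what forces the phase portrait in each $R_i$ to be unique.
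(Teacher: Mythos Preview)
Your plan matches the paper's proof almost step for step: the paper builds the displacement function in Proposition~\ref{itemi} as $f_Z(x)=\beta+\ell x^2-k(x-\alpha)^2+\cdots$ (so the sign on $\beta$ is opposite to what you wrote), extracts $\beta_1$ from the double-zero condition and $\beta_2,\beta_3$ from the conditions $x^-=\alpha$ and $x^+=0$ (Lemmas~\ref{lemma3}--\ref{lemzeros2}), obtains the unique pseudo-equilibrium by the same endpoint-sign argument (Lemma~\ref{R1}), and produces $\beta_4,\beta_5$ by continuous dependence before reading off $(a)$--$(r)$ region by region. Two small technical adjustments: the paper uses a Morse normal form rather than Weierstrass preparation, which sidesteps the fact that the individual roots $r_1,r_2$ are not $\CC^r$ across the saddle-node curve; and the exhaustiveness concern you raise is dispatched in one line by choosing $\mathcal{A}_1$ so that $(\mathcal{A}_1\cap\Sigma)\setminus\overline{\Sigma^s}\subset\Sigma^c$, which forces the dynamics in $\mathcal{A}_1$ to be determined entirely by $f_Z$ and the sliding segment.
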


The diagram bifurcation for the case when $\Gamma_0$ is a unstable elementary simple two-fold cycle of a vector field $Z_0$  is obtained by rotating the previous one (see Figure \ref{stable}) by $\pi$ radius (see Figure \ref{repulsive}).

\begin{figure}
	\vspace*{0.3cm}
 \begin{adjustbox}{addcode={
\begin{minipage}{\width}}
{\caption{Bifurcation diagram assuming that $\Gamma_0$ is stable.}
\label{stable}
\end{minipage}}, rotate=90,center}
\begin{overpic}[width=21cm]{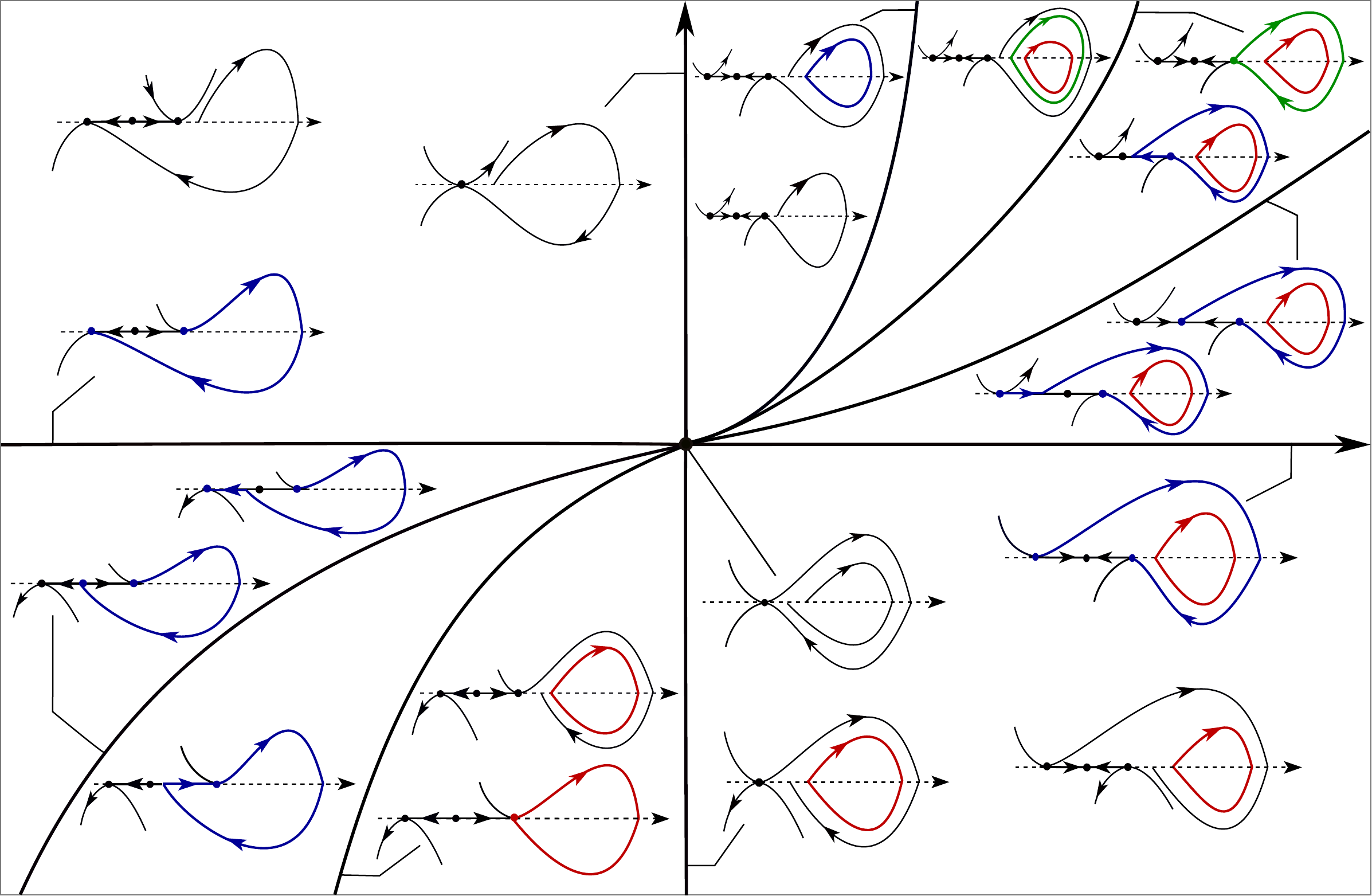}
			\begin{footnotesize}
                \put(98,31){$\alpha$}
				\put(51,63){$\beta$}
				\put(55.3,22){$0$}
				
				\put(63,10){$\gamma_1$}
				\put(54.9,8.8){$0$}
					
                \put(88.7,10.7){$\gamma_1$}
                \put(75.8,10){$0$}
                \put(81.5,8.1){$\alpha$}  
                \put(79.3,10.1){$p^s$}             
                
                \put(87.5,26.5){$\gamma_1$}
                \put(75.2,25.2){$0$}
                \put(82,23){$\alpha$}                 
                \put(79,25.5){$p^s$}           
                
                \put(85,37.3){$\gamma_1$}
                \put(72.5,37.3){$0$}
                \put(80,35){$\alpha$}
                \put(77,35){$p^s$}                
                
                \put(94.7,43.2){$\gamma_1$}	
                \put(85.7,42.6){$p^s$}
                \put(82.4,42.4){$0$}                
                \put(89.7,40.5){$\alpha$}                
                
                \put(89.3,54.6){$\gamma_1$}
                \put(85,52.5){$\alpha$}
                \put(79.7,54.4){$0$}
                \put(81,52.4){$p^s$}        
                
                \put(94.9,61.4){$\gamma_1$}
                \put(89.5,59.4){$\alpha$}
                \put(84,61.4){$0$}
                \put(87,61.7){$p^s$}               
                
                \put(76,60){$\gamma_1$}               
                \put(76.8,62.7){$\gamma_2$}
                \put(71.7,59.6){$\alpha$}
                \put(67.6,61.5){$0$}
                \put(69.8,61.8){$p^s$}                 
                
                \put(61.3,61){$\gamma_3$}
                \put(55.7,58.3){$\alpha$}
                \put(51.2,60.3){$0$}
                \put(53.4,60.6){$p^s$} 
                
                \put(55.2,48){$\alpha$}
                \put(51.3,50.1){$0$}
                \put(53.6,50.5){$p^s$}  
                
                \put(33.1, 52.4){$0$}
                
                \put(5.8,55){$\alpha$}
                \put(9.1,57.3){$p^u$}
                \put(12.5,57.1){$0$}
                
                \put(6,39.7){$\alpha$}
                \put(9.2,42){$p^u$}
                \put(12.8,41.6){$0$}
                
                \put(14.5,28.4){$\alpha$}
                \put(18.1,30.3){$p^u$}               
                \put(21.2,30.3){$0$}
                
                \put(9.4,23.4){$0$}
                \put(5.7,23.6){$p^u$}
                \put(2.7,21.1){$\alpha$}
                
                \put(15.4,8.6){$0$}
                \put(10.8,8.9){$p^u$}
                \put(7.5,6.7){$\alpha$}
                
                \put(37,6.4){$0$}
                \put(32.8,6.5){$p^u$}
                \put(29.1,4.3){$\alpha$}
                
                \put(44,16){$\gamma_1$}
                \put(37.3,15.3){$0$}
                \put(34.3,15.7){$p^u$}
                \put(31.7,13.4){$\alpha$}
                
                \put(44,25){$R_1$}
                \put(56,27){$R_2$}
                \put(63,34){$R_3$}
                \put(64,38){$R_4$}
                \put(61,41){$R_5$}
                \put(53,43){$R_6$}
                \put(40,41){$R_7$}
                \put(36,31){$R_8$}
                \put(38,28){$R_9$}
                
                \put(49,-2){$B_{12}$}
                \put(100.5,32){$B_{23}$}
                \put(100.5,55){$B_{34}$}
                \put(82,66){$B_{45}$}
                \put(66,66){$B_{56}$}
                \put(49,66){$B_{67}$}
                \put(-3,32){$B_{78}$}
                \put(0.5,-2){$B_{89}$}
                \put(23,-2){$B_{91}$}
			\end{footnotesize}
		\end{overpic}
 \end{adjustbox}
\end{figure}

\begin{figure}
		\vspace*{0.3cm}
  \begin{adjustbox}{addcode={
\begin{minipage}{\width}}
{\caption{Bifurcation diagram assuming that $\Gamma_0$ is unstable.}
\label{repulsive}
\end{minipage}}, rotate=90,center}    
\begin{overpic}[width=21cm]{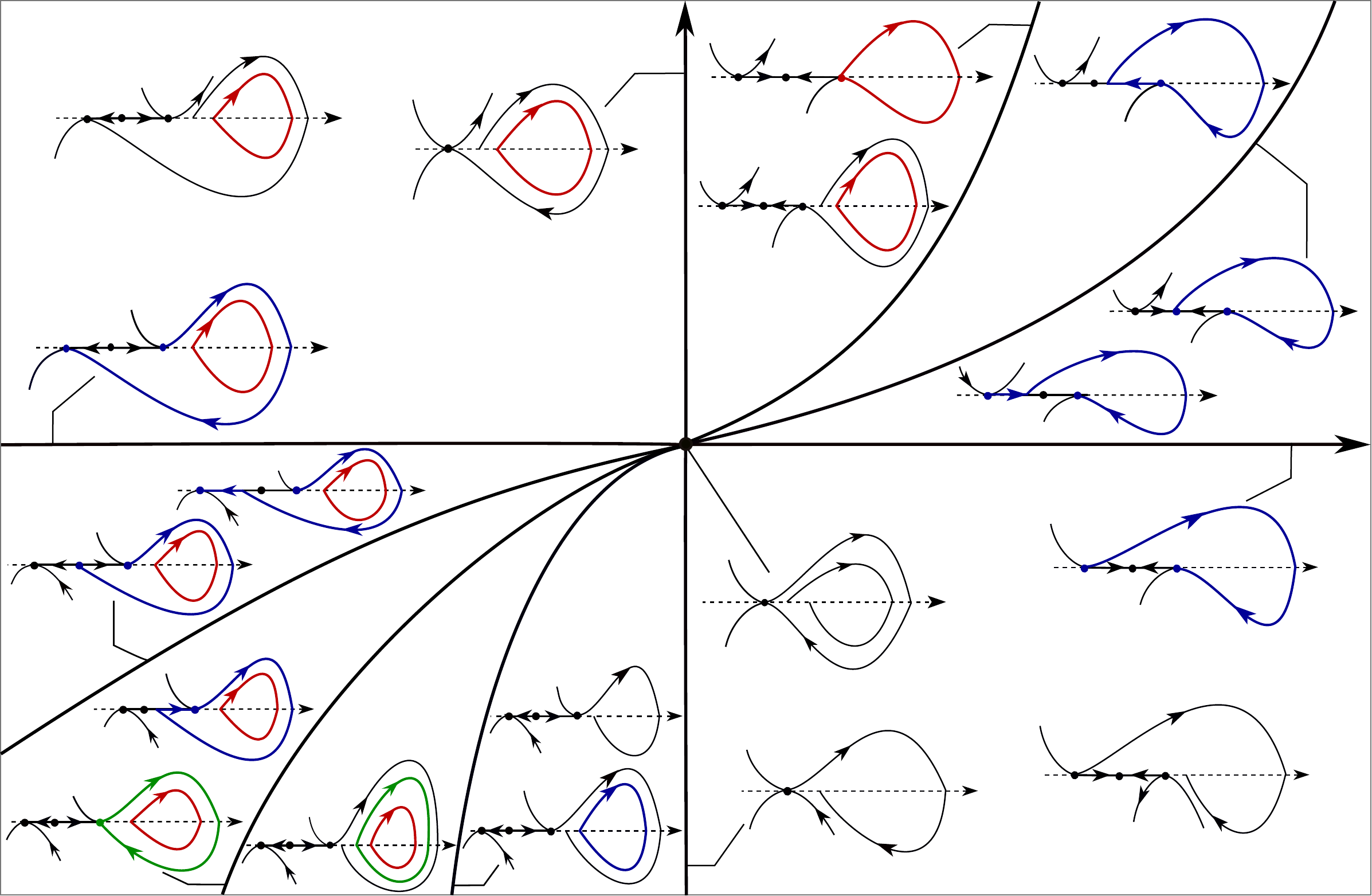}
			\begin{footnotesize}
				\put(98,31){$\alpha$}
				\put(51,63){$\beta$}
				\put(55.3,22){$0$}
				
				\put(57,8.2){$0$}  						
				
				\put(77.9,9.3){$0$}
				\put(81.1,7.2){$p^s$}
				\put(84.5,7.3){$\alpha$}
				
		     	\put(78.7,24.4){$0$}
		     	\put(81.8,22.2){$p^s$}
				\put(85,22.3){$\alpha$}				
				
				\put(71.6,37){$0$}
				\put(74.5,34.9){$p^s$}
				\put(78,35){$\alpha$}
				
				\put(82.4,43.2){$0$}
				\put(85,40.9){$p^s$}
				\put(89,41){$\alpha$}
				
				\put(77,59.8){$0$}
				\put(79.5,57.5){$p^s$}
				\put(84,57.7){$\alpha$}
				
				\put(53.4,60.3){$0$}
				\put(57,60.7){$p^s$}             
				\put(61,58.2){$\alpha$}						
				
				\put(64,52){$\gamma_1$} 
				\put(52.2,50.8){$0$}
				\put(55.4,51.2){$p^s$}             
				\put(57.8,48.6){$\alpha$}				
				
				\put(40,56.5){$\gamma_1$}
				\put(32.2,55){$0$} 				
				
				\put(19,58){$\gamma_1$}
				\put(11.8,57.2){$0$}
				\put(8.3,57.6){$p^u$}             
				\put(5.8,55){$\alpha$}				
				
				\put(17.5,40.9){$\gamma_1$}
				\put(11.4,40.5){$0$}
				\put(7.7,40.7){$p^u$}
				\put(4.3,38.6){$\alpha$}				
				
				\put(26,30.5){$\gamma_1$}
				\put(21.1,30.2){$0$}
				\put(18.2,30.2){$p^u$}
				\put(13.8,28){$\alpha$}
			    
			    \put(13.8,24.7){$\gamma_1$}
			    \put(2,22.4){$\alpha$}
			    \put(5.3,24.9){$p^u$}
			    \put(8.8,24.6){$0$}	  
			    
			    \put(18.2,14.6){$\gamma_1$}
			    \put(8.3,12){$\alpha$}
			    \put(10.1,14.3){$p^u$}
			    \put(13.8,14.3){$0$}
			    
				\put(12,6.2){$\gamma_1$}	
				\put(6.8,6){$0$}
				\put(3.4,6.4){$p^u$}
				\put(1.4,4){$\alpha$}
				
				\put(28.4,4.6){$\gamma_1$}
                \put(28.9,6.9){$\gamma_2$}
                \put(23.7,2.1){$0$}
                \put(20.8,4.6){$p^u$}
                \put(18.5,2.1){$\alpha$}
                
                \put(44.9,6.2){$\gamma_3$}
                \put(39.8,5.2){$0$}
                \put(36.7,5.6){$p^u$}
                \put(34.5,3){$\alpha$}
                                    
                \put(41.3,11.5){$0$}
                \put(38.7,14){$p^u$}
                \put(36.4,11.4){$\alpha$}                    
			\end{footnotesize}
		\end{overpic}
  \end{adjustbox}
\end{figure}
\newpage

\section{Preliminary results}\label{proofs}

This section is devoted to provide some preliminary results needed for proving Theorems  \ref{R2} and \ref{T:BD}. In Subsection \ref{subdisp} we construct the displacement function $f_Z$ of $Z$. We shall see that  the crossing and critical crossing cycles of $Z$ correspond  to the zeros of $f_Z$, so in Subsection \ref{subzeros} we study these zeros.  Finally, in Subsection \ref{sliding} we analzse the sliding dynamics.

\subsection{Displacement function}\label{subdisp}

In what follows the neighborhoods $\mathcal{A}_0$ and $\V_0$ will be reduced if necessary.  Moreover, for $Z\in\V_0$ we denote $\eta(Z)=(\alpha,\beta)$, where  $\eta$ is defined in \eqref{bifun}.

For $\xi>0$ sufficiently small, the section  $\Sigma^+=\{ (x,\xi):~(x,0)\in\Sigma \}$ is transversal to the trajectory of $X_0$ passing through $(0,0)$ forward in time. By transversality, for each $Z=(X,Y)\in\V_0$,  $\Sigma^+$ is also transversal to the trajectory of $X$ starting at $(0,0)$ forward in time. 
Denote by $(p^+,\xi)$ the first intersection of this trajectory with $\Sigma^+$. From the Tubular Flow Theorem, there exists  $\delta>0$ such that the trajectory starting at  $(x,0)$, for $x\in[0,\delta)$, intersects $\Sigma^+$ transversally at $(\rho_1(x),\xi)$ nearby $p^+$. Accordingly, we have well defined a {\it transition  map } $\rho_1: [0,\delta) \longmapsto \Sigma^+$ (see Figure \ref{transition}).

\begin{lemma}
\label{lemma1}
The transition map $\rho_1: [0,\delta) \longmapsto \Sigma^+$ around $x=0$ is given by 
\begin{equation}
\label{rho1}
\rho_1(x) =p^+ + {\ell_1}(Z) x^2 + \CO(x^3),
\end{equation}
with ${\ell_1}(Z) > 0$.
\end{lemma}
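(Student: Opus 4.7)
The map $\rho_1$ is smooth at $x=0$ by the Implicit Function Theorem. Indeed, by the Tubular Flow construction of $\Sigma^+$, the orbit of $X$ is transverse to $\Sigma^+$ at $(p^+,\xi)$, i.e., $X^2(p^+,\xi)\neq 0$. Hence the equation $\varphi_X^2(T,x,0)=\xi$ determines a smooth hitting time $T(x)$ with $T(0)=T_0$, and $\rho_1(x)=\varphi_X^1(T(x),x,0)$ is $\CC^r$ with $\rho_1(0)=p^+$.

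To compute $\rho_1'(0)$ I would exploit the flow-commutation identity $(D\varphi_X^t)_p\cdot X(p)=X(\varphi_X^t(p))$. Condition $(C)$ forces $X(0,0)=(a,0)$ with $a:=X^1(0,0)>0$, so evaluating this identity at $p=(0,0)$ and $t=T_0$ yields $\partial_x\varphi_X(T_0,0,0)=\tfrac{1}{a}X(p^+,\xi)$. Implicit differentiation of $\varphi_X^2(T(x),x,0)=\xi$ then gives $T'(0)=-1/a$, and therefore
\[
\rho_1'(0) \;=\; X^1(p^+,\xi)\,T'(0)+\partial_x\varphi_X^1(T_0,0,0) \;=\; -\frac{X^1(p^+,\xi)}{a}+\frac{X^1(p^+,\xi)}{a} \;=\; 0.
\]

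It remains to show $\ell_1(Z)=\rho_1''(0)/2>0$. Setting $b:=\partial_x X^2(0,0)$, the visible-fold hypothesis reads $X(X^2)(0,0)=ab>0$, so $b>0$. I would extract the sign of $\rho_1''(0)$ either by carrying the implicit differentiation of $\varphi_X^2(T(x),x,0)=\xi$ to second order (using the second variational equation along the fold orbit to compute $T''(0)$ and $\partial_x^2\varphi_X^j(T_0,0,0)$), or by reducing $X$ to the local fold normal form with linear part $(1,x)$, in which explicit integration gives $\rho_1(x)=\sqrt{x^2+2\xi}+\CO(x^3)=p^+ + \tfrac{1}{2p^+}x^2+\CO(x^3)$. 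The hard part is bookkeeping the contribution of the nonlinear part of $X$ to $\rho_1''(0)$: the leading term $1/(2p^+)$ dominates the $\CO(\xi)$-corrections provided $\xi$ is chosen small, so shrinking $\mathcal{A}_0$ and $\V_0$ if necessary secures $\ell_1(Z)>0$.
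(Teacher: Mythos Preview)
Your argument is correct and reaches the same expansion as the paper, but the two proofs diverge on how the vanishing $\rho_1'(0)=0$ is obtained. The paper works entirely through the Vishik fold normal form: it conjugates $X$ near $(0,0)$ to $\widetilde X=(1,x)$ by a diffeomorphism $h$ preserving $\Sigma$, integrates $\widetilde X$ explicitly to get the transition map $u\mapsto\sqrt{u^2+2\xi}=\sqrt{2\xi}+\tfrac{1}{2\sqrt{2\xi}}\,u^2+\CO(u^3)$, and then appeals to the conjugacy to carry this quadratic structure back to $\rho_1$. Your flow-identity computation is instead intrinsic: the relation $(D\varphi_X^{T_0})_{(0,0)}\cdot X(0,0)=X(p^+,\xi)$ combined with the fact that $X(0,0)$ is tangent to $\Sigma$ forces the cancellation $\rho_1'(0)=0$ directly, without any coordinate change---this is cleaner and more geometric than reading it off from the normal form. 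For the sign of $\ell_1(Z)$ the two methods reconverge, since your second option (explicit integration of the leading part $(1,x)$) is precisely the paper's normal-form calculation; the residual difference is that the paper transfers positivity of the quadratic coefficient through the conjugacy, which works for any fixed $\xi$, whereas your perturbative variant---$1/(2p^+)\sim\xi^{-1/2}$ dominating the nonlinear corrections---is also valid but imposes an extra smallness requirement on the height $\xi$ of the section $\Sigma^+$.
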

\begin{proof}
Consider a normal form $\widetilde X$ of $X$ nearby $(0,0)$, for instance $\widetilde X(x,y)=(1,x)$ (see \cite{Vis}). Denote   the trajectory of $\widetilde X$ passing thought $(x,0) \in \Sigma$  by $\dis\varphi(t,x,0) = (\varphi_1(t,x,0),\varphi_2(t,x,0))$. It is easy to see that $T(x)=-x + \sqrt{x^2+ 2  \xi}$ is the first positive time spent by $\dis\varphi(t,x,0)$ to intercept $\Sigma^+$. Then the first component of the intersection between the trajectory $\dis\varphi(t,x,0)$ and the section $\Sigma^+$ can be expanded in Taylor series around $x=0$ as
$$
\varphi_1(T,x,0)=a + bx^2 + \CO(x^3),~\text{where}~ a=\sqrt{2\xi}~\text{and}~b =1/(2\sqrt{2\xi})>0.
$$
Since $X$ is locally conjugated to $\widetilde X$, the trajectory of $X$ passing thought $(x,0)$ is given by $\psi(t,x,0) =(\psi_1(t,x,0),\psi_2(t,x,0))  = h^{-1}\circ \varphi (t,h(x,0))$, being $h:\R^2  \to \R^2$  a diffeomorphism such that $h(0)=0$.
Therefore, $\rho^1_Z(x)=\dis\psi_1(T^*(x) ,x,0)$ can be expanded in Taylor series around $x=0$ as $\eqref{rho1}$, where $T^*(x)$ is the positive time  spent by $\psi(t,x,0)$ to intercept $\Sigma^+$. 
\end{proof}

Analogously, the section $\Sigma^-= \{ (x,-\xi):~(x,0)\in \Sigma \}$ is transversal to the trajectory of $Y_0$ passing through $(\alpha,0)$ backward in time. Then $\Sigma^-$ is transversal to the trajectory of $Y$ starting at $(\alpha,0)$ backward in time. Denote by  $(p^-,-\xi)$ the first intersection of this trajectory with $\Sigma^-$. Then there exists $\wtilde{\delta}>0$ such that the trajectory starting at  $(x,0)$ backward in time, for $x\in[\alpha,\alpha+\wtilde{\delta})$, intersects $\Sigma^-$ transversally at $(\mu_1(x),-\xi)$ nearby $p^-$. Therefore,
we have well defined a {\it transition  map } $\mu_1: [\alpha,\alpha+\wtilde{\delta}) \longmapsto \Sigma^-$ (see Figure \ref{transition}), and this map is characterized in the following lemma.

\begin{lemma}\label{lemma2}
The transition  map $\mu_1: [\alpha, \alpha + \wtilde{\delta}) \to \Sigma^-$ around $x=\alpha$ is given by
\begin{equation*}
\label{mu1}
\mu_Z^1(x)= p^- + k_1(Z)( x - \alpha)^2 + \CO((x - \alpha)^3)
\end{equation*}
with $k_1(Z)  > 0$. 
\end{lemma}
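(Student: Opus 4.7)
The plan is to mirror the argument given for Lemma \ref{lemma1}, with the appropriate sign adjustments forced by the orientation of $Y$. First I would shift coordinates by $\tilde x = x - \alpha$, so that the fold point of $Y$ becomes the origin. By \eqref{p1} we have $Y^2(\alpha,0)=0$, $\p Y^2/\p x(\alpha,0)>0$, and $Y^1(\alpha,0)<0$, which characterizes a visible fold whose flow moves to the left and bends into the region $y<0$. Following Vishik's theorem on normal forms of tangencies (as cited in the proof of Lemma \ref{lemma1}), $Y$ is locally $\mathcal{C}^r$-conjugate near $(\alpha,0)$, via a diffeomorphism $h$ fixing the switching manifold, to the normal form $\widetilde Y(\tilde x,\tilde y)=(-1,\tilde x)$.

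Next I would integrate the normal form explicitly. The trajectory of $\widetilde Y$ through $(\tilde x_0,0)$ in \emph{backward} time $s=-t>0$ is $(\tilde x_0+s,\, -\tilde x_0 s - s^2/2)$. Imposing the condition $\tilde y=-\xi$ yields the quadratic $s^2/2+\tilde x_0 s-\xi=0$, whose positive root is
\[
S(\tilde x_0)=-\tilde x_0+\sqrt{\tilde x_0^{\,2}+2\xi},
\]
so the first coordinate of the crossing point with $\{\tilde y=-\xi\}$ is $\sqrt{\tilde x_0^{\,2}+2\xi}$. Expanding around $\tilde x_0=0$ gives
\[
\sqrt{\tilde x_0^{\,2}+2\xi}=\sqrt{2\xi}+\frac{1}{2\sqrt{2\xi}}\,\tilde x_0^{\,2}+\CO(\tilde x_0^{\,4}),
\]
a purely even expansion with a strictly positive quadratic coefficient.

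Finally, I would push this expansion back through the conjugating diffeomorphism $h$ exactly as in the proof of Lemma \ref{lemma1}: since $h$ preserves $\Sigma$ and $\Sigma^-$ (up to a reparametrization of the horizontal coordinate), composition with $h$ changes the value of the quadratic coefficient by a positive factor and introduces only a higher-order correction. Undoing the shift $\tilde x=x-\alpha$ then yields
\[
\mu_1(x)=p^-+k_1(Z)\,(x-\alpha)^2+\CO\bigl((x-\alpha)^3\bigr),\qquad k_1(Z)>0,
\]
as claimed. The only subtle point—handled exactly as in Lemma \ref{lemma1}—is checking that the absence of a linear term survives the conjugation; this follows from the fact that $(\alpha,0)$ is a tangency, so that the trajectory is tangent to $\Sigma$ at $x=\alpha$, hence the transition time $S(x)$ vanishes to first order and the map is even to leading order. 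I do not foresee a serious obstacle: the proof is essentially a time-reversed, translated copy of the computation performed for $\rho_1$.
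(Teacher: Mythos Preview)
Your proposal is correct and follows essentially the same approach as the paper: the paper's proof simply states that one argues analogously to Lemma~\ref{lemma1} using the normal form $\widetilde Y(x,y)=(-1,x-\alpha)$, which is exactly your $(-1,\tilde x)$ after the shift $\tilde x=x-\alpha$. Your explicit backward-time integration and expansion reproduce precisely the computation implicit in the paper's one-line reference.
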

\begin{proof}
This proof follows analogously to the proof of Lemma \ref{lemma1}, but now considering the normal form  $\widetilde Y(x,y)=(-1,x-\alpha)$. 
\end{proof}

\begin{figure}[h]
	\begin{overpic}[width=7cm]{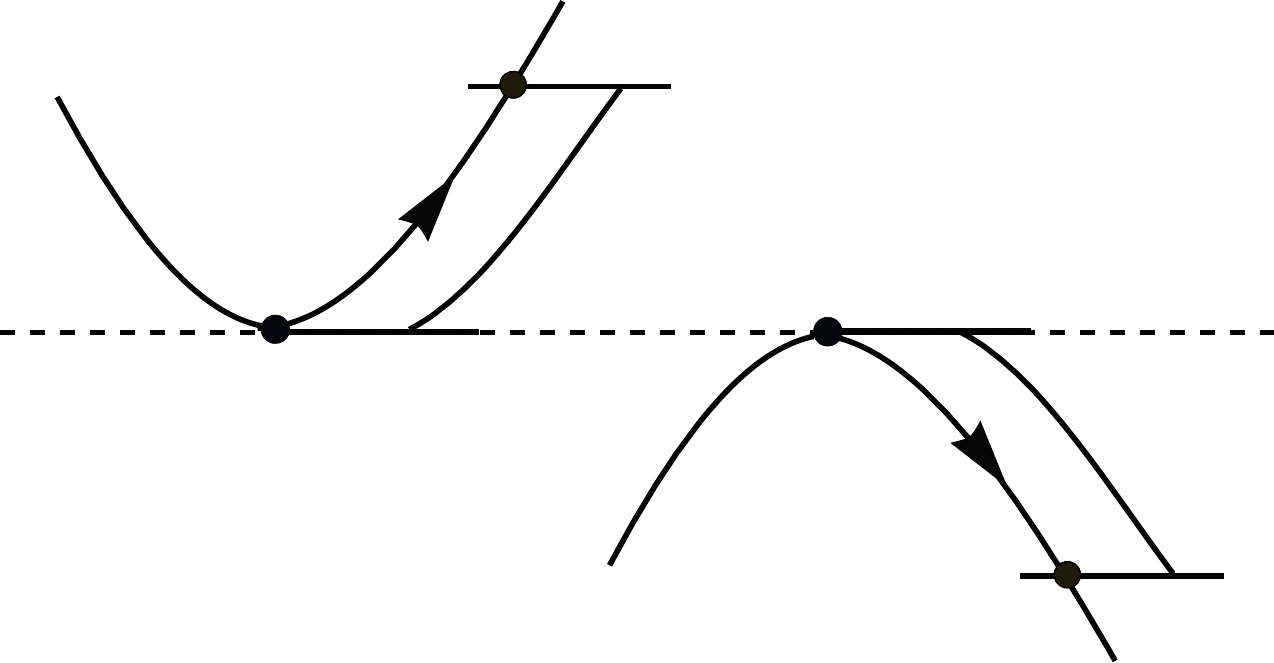}
		\put(20,20){$0$}
		\put(63,20){$\alpha$}
		\put(89,16){$\mu_1$}
		\put(44,34){$\rho_1$}
		\put(2,28){$\Sigma$} 
		\begin{small} 
			\put(55,43){$\Sigma^+$}
			\put(24,48){$(p^+,\xi)$}
			\put(97.5,5){$\Sigma^-$}
			\put(63,0){$(p^-,-\xi)$}
		\end{small}
	\end{overpic}
	\caption{Transitions maps for the vector field $Z$.}
	\label{transition}
\end{figure}

Since the trajectory of $X$  reaches $\Sigma$ transversally forward in time, it also defines a diffeomorphism $\rho_2: \rho_1([0,\delta)) \to \Sigma$, which can be expanded around $p^+$ as
\begin{equation}
\label{rho2}
\rho_2(x)=q_{X} + \ell_2(Z) (x-p^+) + \CO((x -p^+)^2),
\end{equation}
with $\ell_2(Z) <0$. Similarly, the trajectory of $Y$ backward in time defines a  diffeomorphism  $\mu_2:  \mu_1([\alpha,\alpha+\wtilde{\alpha}))\to  \Sigma$,  which can be expanded around $p^-$ as
\begin{equation}
\label{mu2}
\mu_2(x)= q_Y + k_2 (Z) (x - p^-) + \CO((x- p^-)^2),
\end{equation}
with $k_2 (Z) <0$  (see Figure \ref{dis1}).

\begin{figure}[h] 
	\begin{center}
		\begin{overpic}[width=10cm]{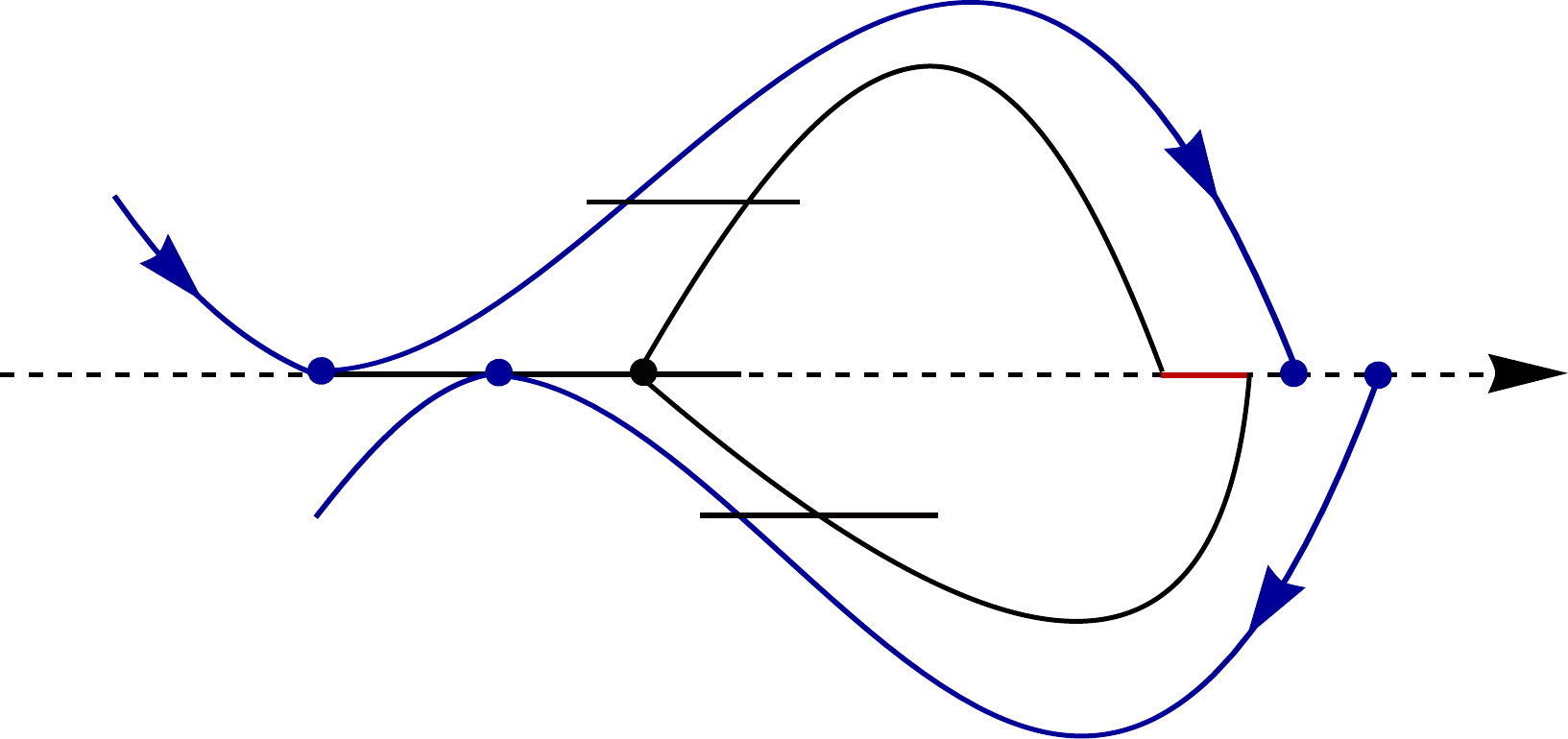}
				\put(1,25){$\Sigma$} 
				\put(19.3,25){$0$}
				\put(31,20){$\alpha$}
				\put(83,26){$q_{X}$}
				\put(88,20){$q_{Y}$}
				\put(45,27){${\rho}_1$}
				\put(57,39){${\rho}_2$}
				\put(67,10){${\mu}_2$}
				\put(49,18){${\mu}_1$}
		\end{overpic}
	\end{center}
	\caption{Displacement function $f_Z$ associated with $Z \in \V_0$.}
	\label{dis1}
\end{figure}

Now define $\sigma(\alpha) = [ 0, \delta \big) \cap [ \alpha, \alpha +\wtilde\delta \big)$. Note that 
\begin{equation}
\label{I}
\sigma(\alpha)=[A_\alpha, \lambda), ~ \text{for a fixed} ~ \lambda> 0, ~\text{where} ~A_\alpha=\max\{ 0,\alpha\} .
\end{equation}
Hence the  displacement function $f_Z: \sigma(\alpha) \to \R$ associated with $Z$ is given by
\begin{equation*}
f_Z(x) = \rho_2 \circ \rho_1 (x) - \mu_2 \circ \mu_1(x).
\end{equation*} 

Note that  $\mathcal{A}_0 \cap \Sigma= \mathcal{A}_{01} \dot{\cup}\mathcal{A}_{02}$, where $\mathcal{A}_{01}$ and   $\mathcal{A}_{02}$ are open connected components in   $\Sigma$ with $(0,0) \in \mathcal{A}_{01}$.  Without loss of generality the annulus $\mathcal{A}_0$ can be taken in order that $\sigma(\alpha)= \{ (x,0) \in \mathcal{A}_{01}: ~ x \geq A_\alpha \}$. The next proposition is obtained directly from Lemmas \ref{lemma1} and \ref{lemma2}, and  expressions \eqref{rho2} and \eqref{mu2}.

\begin{proposition}\label{itemi}
Assume that $\Gamma_0$ is a simple two-fold cycle of $Z_0\in\Omega$. Then there exist an annulus $\mathcal{A}_0$ of $\Gamma_0$ and a neighborhood $\V_0\subset \Omega$ of $Z_0$  such that, for each $Z\in\V_0$, the displacement function $f_{Z}:\sigma(\al)\rightarrow\Sigma$ reads
\begin{equation*}
\label{fz}
f_Z(x) =\beta + \ell (Z)x^2 - k (Z)(x-\alpha)^2 + \CO(x^3) + \CO((x-\alpha)^3),
\end{equation*}
with $\ell (Z)= \ell_1 (Z)\ell_2 (Z) <0$  and  $k (Z)=k_1(Z) k_2(Z)<0$.  In particular, for $Z=Z_0$, we have
$f_{Z_0}(x) = M x^2 + \CO(x^3)$, with $M=\ell(Z_0) - k(Z_0) \in \R$.
\end{proposition}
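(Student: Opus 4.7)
The plan is to obtain the expansion of $f_Z$ by composing the four transition maps already characterized in Lemmas \ref{lemma1} and \ref{lemma2} and in \eqref{rho2}--\eqref{mu2}, and then to read off the claimed normal form by substituting $\beta = q_X - q_Y$ from the definition \eqref{bifun} of $\eta$.

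First, I would compute $\rho_2 \circ \rho_1$. Lemma \ref{lemma1} gives $\rho_1(x) - p^+ = \ell_1(Z) x^2 + \CO(x^3)$, and \eqref{rho2} expresses $\rho_2$ as an affine perturbation of $q_X$ in the variable $x - p^+$. Plugging the first into the second, the leading $\CO(x^2)$ contribution comes solely from the linear part of $\rho_2$ applied to $\ell_1(Z) x^2$, while the quadratic part of $\rho_2$ applied to $(\rho_1(x) - p^+)$ is already $\CO(x^4)$. This yields $\rho_2 \circ \rho_1(x) = q_X + \ell_1(Z)\ell_2(Z) x^2 + \CO(x^3)$. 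The same reasoning applied to $\mu_2 \circ \mu_1$, with $x - \alpha$ in place of $x$, produces $\mu_2 \circ \mu_1(x) = q_Y + k_1(Z)k_2(Z)(x-\alpha)^2 + \CO((x-\alpha)^3)$.

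Subtracting and using $\beta = q_X - q_Y$ immediately gives the displayed expression for $f_Z(x)$, with $\ell(Z) = \ell_1(Z)\ell_2(Z)$ and $k(Z) = k_1(Z)k_2(Z)$. The sign statements $\ell(Z)<0$ and $k(Z)<0$ follow because Lemma \ref{lemma1} and \eqref{rho2} give $\ell_1(Z)>0$ and $\ell_2(Z)<0$, while Lemma \ref{lemma2} and \eqref{mu2} give $k_1(Z)>0$ and $k_2(Z)<0$. Specializing to $Z=Z_0$, for which $\alpha=0$ and $\beta=0$ by condition $(C)$ and the definition of $\eta$, collapses the two quadratic terms onto a single $x^2$ with coefficient $M = \ell(Z_0) - k(Z_0)$.

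The only mild subtlety is ensuring that the neighborhoods $\mathcal{A}_0$ and $\V_0$ can be chosen uniformly so that all four transition maps $\rho_1,\rho_2,\mu_1,\mu_2$ are simultaneously well defined on overlapping domains whose intersection contains $\sigma(\alpha) = [A_\alpha,\lambda)$ in \eqref{I}. This is not a real obstacle: since the trajectories of $X_0$ and $Y_0$ from the two folds reach $\Sigma$ transversally at the same point by $(C)$, continuous dependence on initial conditions and on the vector field lets us shrink $\V_0$ so that $\rho_1([A_\alpha,\lambda))$ stays inside the domain of $\rho_2$ and analogously for $\mu_1,\mu_2$. Under these choices the composition, and thus the expansion above, is justified on all of $\sigma(\alpha)$, completing the proof.
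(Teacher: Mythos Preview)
Your proof is correct and follows exactly the approach the paper intends: it states that the proposition ``is obtained directly from Lemmas \ref{lemma1} and \ref{lemma2}, and expressions \eqref{rho2} and \eqref{mu2}'', which is precisely the composition-and-subtraction argument you wrote out. Your additional remarks on the sign conditions and on shrinking $\V_0$ to make the domains compatible are fine elaborations of points the paper leaves implicit.
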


Note that the zeros of $f_Z$ correspond to the crossing and critical crossing cycles of $Z$ nearby $\Gamma$. 

\begin{remark}\label{defi}
If $\x =A_\alpha$ and $f_Z(\x)=0$, then there exists a {\it critical crossing cycle} (see \cite{FPT}) passing through $(A_\alpha,0)$.  If $\x \in \inte \sigma (\alpha)$ and $f_Z(\x)=0$, then there exists a {\it crossing cycle} passing through $(\x,0)$. Furthermore, if $f'_Z(\x) >0$ the cycle is stable, and if $f'_Z(\x) <0$ the cycle unstable. If $f'_Z(\x)=0$ and  $f''_Z(\x) > 0$, the cycle is semi-stable.
\end{remark}

\subsection{Zeros of the displacement function}
\label{subzeros}

Assume that $M \neq 0$. If  $M >0$ (resp. $M <0$), then  $f_{Z_0}(x) < 0$  (resp. $f_{Z_0}(x) > 0$) for $x\in\sigma(0)$, consequently $\Gamma$ is a stable  (resp. unstable) simple two-fold cycle.  For the sake of simplicity we denote $\ell=\ell(Z)$, $k=k(Z)$, and $L=\ell - k$. Note that $L\neq 0$ for every $Z \in \V_0$ and $L \to M$ when $(\alpha,\beta) \to (0,0)$.

In order to study the existence of cycles of $Z$ nearby $\Gamma$  we shall study the zeros of $f_Z$ contained in $\sigma(\alpha$) when $(\alpha,\beta)$ is taken in a small neighborhood of the origin.

\begin{lemma} \label{lemma3} Let $U=\eta(\V_0)$. Then there is a unique $C^r$ curve $\beta=\beta_1(\al)$ defined in a small neighborhood of $0\in\R$ for which $f_Z$ has a zero $x(\al)$ of multiplicity two for $(\alpha,\beta_1(\alpha))\in U_1$. Moreover, 
\begin{equation*}
x(\alpha)= -\frac{k}{L} \alpha + \CO(\alpha^2) \quad \text{and} \quad \beta_1 (\alpha)= \frac{k \ell}{L} \alpha^2 + \CO(\alpha^3).
\end{equation*}
\end{lemma}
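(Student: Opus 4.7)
\smallskip

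\noindent\textbf{Proof plan for Lemma \ref{lemma3}.}

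The plan is to characterize a double zero of $f_Z$ as a common zero of $f_Z$ and $f_Z'$, and then apply the Implicit Function Theorem to solve simultaneously for $x$ and $\beta$ as $C^r$ functions of $\alpha$.

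More precisely, I would introduce the $C^r$ map $G:\R\times U\to\R^2$ defined by
\begin{equation*}
G(x,\alpha,\beta)=\big(f_Z(x),\,f_Z'(x)\big),
\end{equation*}
where $\eta(Z)=(\alpha,\beta)$. Using Proposition \ref{itemi},
\begin{equation*}
f_Z(x)=\beta+\ell x^{2}-k(x-\alpha)^{2}+\CO(x^{3})+\CO((x-\alpha)^{3}),
\end{equation*}
so at the base point $(x,\alpha,\beta)=(0,0,0)$ one has $f_{Z_0}(0)=0$, $f_{Z_0}'(0)=0$, while $f_{Z_0}''(0)=2M\neq 0$ because $\Gamma_0$ is elementary. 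The Jacobian
\begin{equation*}
\frac{\partial G}{\partial(x,\beta)}\bigg|_{(0,0,0)}=\begin{pmatrix} f_{Z_0}'(0) & 1 \\ f_{Z_0}''(0) & 0 \end{pmatrix}=\begin{pmatrix} 0 & 1 \\ 2M & 0 \end{pmatrix}
\end{equation*}
has determinant $-2M\neq 0$, so the Implicit Function Theorem yields, in a small neighborhood of $\alpha=0$, unique $C^r$ functions $x(\alpha)$ and $\beta_1(\alpha)$ with $x(0)=0$, $\beta_1(0)=0$, and $G(x(\alpha),\alpha,\beta_1(\alpha))=0$. The condition $f_Z''(x(\alpha))\neq 0$, which persists by continuity from $f_{Z_0}''(0)=2M\neq 0$, guarantees that the multiplicity of $x(\alpha)$ as a zero of $f_Z$ is exactly two.

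To extract the asymptotics, I would then solve order by order. From
\begin{equation*}
f_Z'(x)=2Lx+2k\alpha+\CO(x^{2})+\CO((x-\alpha)^{2})=0,
\end{equation*}
and $L\to M\neq 0$ as $(\alpha,\beta)\to(0,0)$, one immediately reads off $x(\alpha)=-\tfrac{k}{L}\alpha+\CO(\alpha^{2})$. Substituting this into $f_Z(x)=0$ and using $x(\alpha)-\alpha=-\tfrac{k+L}{L}\alpha+\CO(\alpha^{2})=-\tfrac{\ell}{L}\alpha+\CO(\alpha^{2})$ gives
\begin{equation*}
\ell\,x(\alpha)^{2}-k(x(\alpha)-\alpha)^{2}=\frac{k\ell(k-\ell)}{L^{2}}\alpha^{2}+\CO(\alpha^{3})=-\frac{k\ell}{L}\alpha^{2}+\CO(\alpha^{3}),
\end{equation*}
so that $\beta_1(\alpha)=\tfrac{k\ell}{L}\alpha^{2}+\CO(\alpha^{3})$, as claimed.

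The genuinely delicate part is not the Implicit Function Theorem itself but handling the dependence of $\ell=\ell(Z)$, $k=k(Z)$, and $L=L(Z)$ on $(\alpha,\beta)$, together with the fact that the remainder terms $\CO(x^{3})$ and $\CO((x-\alpha)^{3})$ hide constants that a priori also depend on $Z$. I would address this by fixing $\V_0$ small enough so that $\ell,k,L$ are bounded with $|L|\geq |M|/2$, and so that the $C^r$ norms of the remainders are uniformly controlled; the asymptotic formulas above then hold with $O$--constants independent of $Z\in\V_0$, which is what is needed for the curve $\beta_1$ to be intrinsically defined on a neighborhood of $0$ in parameter space.
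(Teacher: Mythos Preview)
Your argument is correct and, in fact, more direct than the paper's. The paper recasts the problem geometrically: writing $y=x-\alpha$, it views a double zero of $f_Z$ as a quadratic contact between the level curves $F_\beta(x,y)=\beta+\ell x^2-ky^2+\cdots=0$ and $G_\alpha(x,y)=\alpha+y-x=0$, then applies the Implicit Function Theorem twice in succession---first to the tangency condition $\nabla F_\beta\cdot\nabla G_\alpha^{\perp}=0$ to get $y(x)$, then to $G_\alpha(x,y(x))=0$ to get $x(\alpha)$---and finally reads $\beta_1(\alpha)$ off from $F_\beta=0$. Your single application of the Implicit Function Theorem to the pair $(f_Z,f_Z')$ is the standard fold/saddle-node reduction and achieves the same result in one stroke; your asymptotic computations match theirs. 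The paper's geometric framing pays off slightly in the next lemma (Lemma~\ref{lemzeros}), where the quadratic-contact picture makes the transition from zero to two transverse intersections visually immediate, but analytically your route is equivalent and arguably cleaner. Your closing remark about the $Z$-dependence of $\ell,k,L$ and the remainder constants is well placed; the paper is equally informal on this point, and the resolution you sketch (shrinking $\mathcal V_0$ so that these quantities are uniformly controlled, or equivalently working along a fixed transversal family) is the correct one.
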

\begin{proof}

Taking $y= x-\alpha$ we verify that the zeros of $f_Z$ are in one-to-one correspon-dence with the solutions of the system:
\begin{equation}
\label{ps-fz}
\left\{\begin{array}{l}
0= F_\beta(x,y)=\beta + \ell x^2 - k y^2 + \CO(x^3) + \CO(y^3),\\
0=G_\alpha(x,y)= \alpha+y-x.
\end{array}\right.
\end{equation}
If $(\hat x, \hat y)$ is a solution of \eqref{ps-fz} for some $(\alpha,\beta)\in\R^2$, then the level curves $F_\beta=0$ and $G_\alpha=0$ intersect each other at $(\hat x, \hat y)$.  In this context, a zero $\hat x$ of $f_Z$ of multiplicity two corresponds to a quadratic contact at $(\hat x,\hat x-\al)$ between the curves $F_\beta=0$ and $G_\alpha=0$.  

The contact points between the level curves of $F_\beta$ and $G_\alpha$ are given by the zeros of $C(x,y)$, where 
\begin{equation*}
C(x,y)=  \nabla F_\beta (x,y) \cdot {\nabla G_\alpha}(x,y)^\perp = 2 \ell x - 2 ky +\CO(x^2) + \CO(y^2).
\end{equation*}
Since  $C(0,0)=0$ and $\dis\frac{\p C}{\p y}(0,0) = - 2 k \neq 0$, it follows from the Implicit Function Theorem that there exists a unique $C^r$ function $y(x)$, defined in a neighborhood of $0\in\R$, with $y(0)=0$ and $y'(0)= \dis\frac{\ell}{k}\neq 1$, such that $C(x,y(x))=0$. In this case,
$$
y(x) = \dis\frac{\ell}{k}  x + \CO(x^2). 
$$
Moreover,
\[
\nabla C (x,y(x))  \cdot {\nabla G_\alpha} (x,y(x)) ^\perp = 2L + \CO(x) \neq  0.
\]
Therefore, $(x,y(x))$ is a quadratic contact between the level curves of $F_\beta$ and $G_\alpha$. 

Now we have to find parameters $(\al,\beta)$ in order that this contact happen at $F_\beta=0$ and $G_\alpha=0$. So denote
$$
\widetilde G(x,\alpha)= G_\alpha(x,y(x))=\alpha + \bigg(\frac{\ell}{k} -1\bigg) x + \CO(x^2),
$$

Since $\widetilde G(0,0)=0$ and  $\dfrac{\p \widetilde G}{\p x}(0,0) = \gamma \neq 0$, it follows from the Implicit Function Theorem  that there exists a unique $C^r$ function $x(\alpha)$, defined in a small neighborhood of $0\in\R$, with $x(0)=0$ and $x'(0)=-\dfrac{k}{L}\neq0$, such that  $\widetilde G(x(\alpha),\alpha))=0$. In this case,
\begin{equation*}
x(\alpha)= -\frac{k}{L} \alpha + \CO(\alpha^2).
\end{equation*}

Finally, we may find a $C^r$ curve $\beta=\beta_1(\al)$ such that $F_{\beta_1(\al)}(x(\alpha),y(x(\alpha)))=0$. Indeed,
$$
F_\beta(x(\alpha),y(x(\alpha)))=\beta - \frac{k  \ell}{L} \alpha^2 + \CO(\alpha^3),
$$
so we conclude that
\begin{equation*}
\beta_1 (\alpha)= \frac{k \ell}{L} \alpha^2 + \CO(\alpha^3). 
\end{equation*}
The uniqueness of $\beta_1(\al)$ is assured again by the Implicit Function Theorem.

Hence if $\beta=\beta_1(\al)$, then $x(\alpha)$ is a zero of multiplicity two of $f_Z$ for $(\al,\beta_1(\alpha))$ in a neighborhood of $(0,0)$ contained in $U=\eta(\V_0)$. 
\end{proof}

From Lemma \ref{lemma3}, the level curves $F_\beta=0$ and $G_\alpha=0$ intersect each other transversally when $\beta \neq \beta_1(\alpha)$.  
In the next proposition, in order to obtain all the zeros of $f_Z$, we study what happens for $\beta>\beta_1(\al)$ and for $\beta<\beta_1(\al)$. This behavior changes with the sign of $M$. From now on we shall assume that $M >0$. The analysis for $M < 0$ will be similar.

\begin{lemma}\label{lemzeros} Let $V$ be the neighborhood given in Lemma \ref{lemma3}. There exists a neighborhood $U_1\subset V$ of $(0,0)$ such that the following statements hold for $(\al,\beta)\in U_1$:
\begin{itemize}
\item [$(a)$] If $\beta > \beta_1(\alpha)$, then $f_Z$ has no zeros in $U_1$.
\item [$(b)$] If $\beta = \beta_1(\alpha)$, then $f_Z$ has a zero $x^*$ of multiplicity two with $f'_Z(x^*)=0$ and $f''_Z(x^*)>0$.
\item [$(c)$] If $\beta < \beta_1(\alpha)$, then $f_Z$ has two distinct zeros, $x^-$ and $x^+$, with $f'_Z(x^+)>0$ and $f'_Z(x^-)<0$. 
\end{itemize} 
\end{lemma}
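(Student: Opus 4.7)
The plan is to exploit the strict convexity of $f_Z$ near the origin, together with the characterization of $\beta_1(\al)$ obtained in Lemma \ref{lemma3}. Differentiating the expansion of Proposition \ref{itemi} twice yields
\[
f_Z''(x) = 2L + \CO(x) + \CO(x-\al),
\]
and because $L \to M > 0$ as $(\al,\be) \to (0,0)$, one may shrink the parameter neighborhood to some $U_1 \subset V$ and choose a spatial window $W$ around $x=0$ on which $f_Z''(x) > 0$ uniformly in $(\al,\be) \in U_1$. Then $f_Z|_W$ is strictly convex, so it has a unique critical point, which is a global minimum on $W$.

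Next I would identify that critical point with the $x(\al)$ of Lemma \ref{lemma3}: the equation $f_Z'(x) = 2Lx + 2k\al + \CO(x^2) + \CO((x-\al)^2) = 0$ is solved by $x = -k\al/L + \CO(\al^2) = x(\al)$. Moreover, since $f_Z$ is affine in $\be$ with slope one, and since $\beta_1(\al)$ is by construction the unique value of $\be$ making $f_Z(x(\al)) = 0$, one obtains the clean identity
\[
f_Z(x(\al)) = \be - \beta_1(\al).
\]
In other words, the minimum of $f_Z$ on $W$ equals $\be - \beta_1(\al)$ exactly.

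The three items then follow from elementary convex-function analysis. (a) If $\be > \beta_1(\al)$, the minimum is strictly positive and $f_Z > 0$ on $W$. (b) If $\be = \beta_1(\al)$, then $f_Z(x(\al)) = f_Z'(x(\al)) = 0$ while $f_Z''(x(\al)) > 0$, realizing a zero of multiplicity exactly two at $x^* := x(\al)$. (c) If $\be < \beta_1(\al)$, the minimum is negative, while $f_Z > 0$ on $\partial W$---this boundary positivity follows because $f_{Z_0}(x) = Mx^2 + \CO(x^3) > 0$ for $x$ small and nonzero, and persists under a small perturbation of $(\al,\be)$; therefore the intermediate value theorem and strict convexity produce exactly two simple zeros $x^- < x(\al) < x^+$. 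The sign information $f_Z'(x^-) < 0 < f_Z'(x^+)$ is then forced by the strict monotonicity of $f_Z'$.

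The main subtlety is the joint calibration of $W$ and $U_1$: one must choose $W$ small enough that $f_Z'' > 0$ throughout and $x(\al) \in W$ for all $(\al,\be) \in U_1$, yet large enough that $f_Z$ stays positive on $\partial W$ for all such parameters. Once the leading-order expansion of Proposition \ref{itemi} and the curve $\beta_1(\al)$ of Lemma \ref{lemma3} are in hand, this calibration is routine and the remainder of the argument is entirely elementary.
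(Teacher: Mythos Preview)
Your argument is correct and runs parallel to the paper's, with one difference in machinery. Both proofs begin from the same observations: $f_Z'(x(\al))=0$, $f_Z''(x(\al))=2L+\CO(\al)>0$, and the minimum value $f_Z(x(\al))=\beta-\beta_1(\al)$. From there the paper invokes the Morse Lemma to conjugate $f_Z$ to the exact quadratic $\e+Av^2$ via a local diffeomorphism $\varphi$, and then reads off the zeros of that model. You instead argue directly from strict convexity ($f_Z''>0$ on a window $W$) together with a boundary-positivity check, which is more elementary and avoids naming Morse theory. Either route is sound; your calibration of $W$ versus $U_1$ is exactly the content the Morse chart handles implicitly.

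One practical point: the paper's Morse-lemma reduction yields the explicit expansions
\[
x^{\pm}=-\dfrac{k}{L}\,\al\pm\sqrt{-\dfrac{\e}{L}}+\CO_2\!\left(\al,\sqrt{-\e}\right),
\]
which are then quoted verbatim in Lemma~\ref{lemzeros2} to locate $x^{\pm}$ relative to $A_\al=\max\{0,\al\}$ and to derive the curves $\beta_2,\beta_3$. Your convexity argument gives existence and the signs of $f_Z'(x^{\pm})$ but does not automatically produce these asymptotics. If you proceed with your approach, you should either append a short implicit-function computation to recover the expansions of $x^{\pm}$, or note that Lemma~\ref{lemzeros2} will require its own derivation of them.
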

\begin{proof}
Consider the functions $F_\beta$ and $G_\alpha$ defined in \eqref{ps-fz}. Given $\e_0>0$, there exists a neighborhood $I\subset \R$ of $0$ such that 
\[ 
K=\{(\alpha,\beta)\in\R^2:\,\al\in\overline{I},\,\beta=\beta_1(\al)+\e,\,\e\in[-\e_0,\e_0]\}\subset V.
\]
Note that for each  $(\alpha,\beta)\in K$ we have
$$
f_Z(x)=f_\e(x) = \e + F_{\beta_1(\alpha)}(x,x-\alpha).
$$

From Lemma \ref{lemma3}, $x(\alpha)$ is a non-degenerate singularity of the map $x\mapsto f_\e(x) -\e$. Indeed,
\[
f_\e(x(\alpha)) =\e, ~ \dfrac{\p f_\e}{\p x} (x(\alpha))=0,~ \text{and} ~\dfrac{\p^2 f_\e}{\p x^2} (x(\alpha))= 2 L + \CO(\alpha) > 0,
\]
From Morse Theory (see \cite{mat}) one may find a local diffeomorphism $\varphi$ around $x(\alpha)$ such that $\f(x(\al))=0$, $\f'(x(\al))=1$, and $f_\e \circ \varphi^{-1} (v) =\e+A v^2$. In this case,  
 $\varphi^{-1}(v)=x(\alpha)+v + \CO(v^2)$ and
 \[
 A=\dis \frac{1}{2}\frac{\p^2 f_\e}{\p x^2} (x(\alpha))= L +\CO(\alpha)>0.
 \]
Hence we have the following possibilities: 
\begin{itemize}
\item[$(a)$] If $\e>0$, then $f_\e \circ \varphi^{-1}$ has no zeros.
\item[$(b)$]  If $\e<0$, then $f_\e \circ \varphi^{-1}$ has two distinct zeros, namely
$$
v^\pm = \pm \sqrt{-\frac{\e}{A}}=\pm\sqrt{-\frac{\e}{L}}+\CO(\al\sqrt{-\e}).
$$
\item[$(c)$] If $\e=0$, then $v^*=0$ is a zero of multiplicity two of $f_\e \circ \varphi^{-1}$. 
\end{itemize}
These zeros are in one-to-one correspondence with the zeros of $f_\e$ by the diffeomorphism $\varphi$. Indeed,  
\begin{equation}
\label{zeros}
\begin{array}{l}
x^*=\varphi^{-1}(v^*)=x(\alpha)=-\dfrac{k}{L} \alpha  +\CO(\alpha^2) ~ \text{for}~ \e=0,~\text{and}\vspace{0.2cm}\\

x^\pm=\varphi^{-1}(v^\pm)=-\dfrac{k}{L} \alpha \pm \dis \sqrt{-\frac{\e}{L}} +\CO_2 \left( \alpha,\sqrt{-\e}\right)~\text{for}~\e<0.
\end{array}
\end{equation}
The term $\CO_2 \left( \alpha,\sqrt{-\e}\right)$ is an abbreviation for $\CO(\al \sqrt{-\e})+\CO(\al^2)+\CO(\e)$.

In addition $(f_\e \circ \varphi^{-1})'(v^+) >0$, $(f_\e \circ \varphi^{-1})'(v^-) <0$, and $(f_0 \circ \varphi^{-1})'(v^*) =0$ with $(f_0 \circ \varphi^{-1})''(v^*) >0$. Since $(\varphi^{-1})'(v)>0$ we also have  $f_\e'(x^+) >0$, $f_\e'(x^-) <0$, and $f_0'(x^*) =0$ with $f_0''(x^*) >0$. We conclude the proof by taking $U_1=\inte K$ (see Figure \ref{U}).

\begin{figure}[h] 
 	\begin{center}
 	\begin{overpic}[width=5cm]{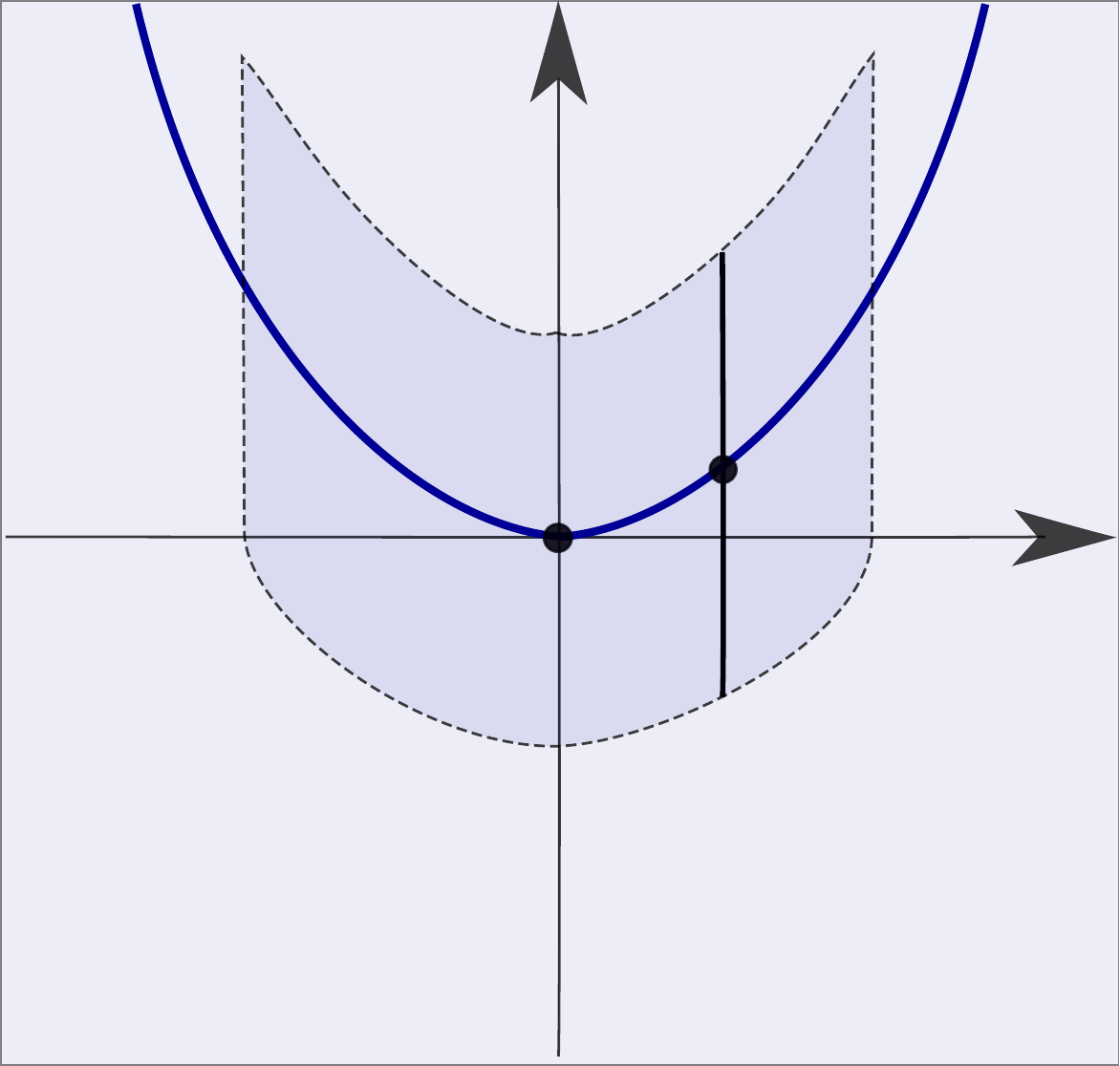}
 			\begin{footnotesize}
 				\put(94,41){$\alpha$} 
 				\put(53,89){$\beta$} 
				\put(5,5){$V$} 
                \put(25,70){$U_1$}
 				\put(63,27){$\beta_1(\alpha)+\e$} 
 			\end{footnotesize}			
 		\end{overpic}
 	\end{center}
 	\caption{Neighborhood $U_1\subset  \R{^2}$  of $(0,0)$.}
 	\label{U}
 \end{figure}
 \end{proof}

In the next lemma we shall determine for what values of $(\alpha,\beta)\in U_1$ the prescribed zeros \eqref{zeros} of $f_Z$ are contained in $\sigma (\alpha)$ (see \eqref{I}). The neighborhood $U_1$ will be reduced if necessary.
\begin{lemma} 
\label{lemzeros2} 
Consider the zeros of $f_Z$ given in \eqref{zeros}, namely $x^*$ for $\beta=\beta_1(\al)$, and $x^{\pm}$ for $\beta < \beta_1(\alpha)$. Then there exist two curves $\beta=\beta_2(\al)$ and $\beta=\beta_3(\al)$, defined in a small neighborhood of $0\in\R$, such that the following statements hold for $(\alpha,\beta) \in U_1$:
\begin{itemize}
\item [$(a)$]  Let $\beta=\beta_1(\al)$. If $\al<0$, then $x^*\notin \sigma (\alpha)$; if $\alpha=0$, then $x^*=A_\al$; if $\alpha>0$, then $x^* \in \inte \sigma (\alpha)$.

\item [$(b)$] Let $\beta<\beta_1(\al)$. If $\alpha \leq 0$, then $x^- \notin \sigma (\alpha)$.  If $\alpha >0$, then $x^- \notin  \sigma (\alpha)$ for $\beta < \beta_2(\alpha)$; $x^- =A_\al$ for $\beta=\beta_2(\alpha)$; and $x^- \in \inte \sigma (\alpha)$ for $\beta > \beta_2(\alpha)$.

\item [$(c)$] Let $\beta<\beta_1(\al)$.  If $\alpha \geq 0$, then $x^+ \in \inte \sigma (\alpha)$.  If $\alpha <0$, then $x^+ \in \inte \sigma (\alpha)$ for $\beta < \beta_3(\alpha)$; $x^+ =A_\al$ for  $\beta=\beta_3(\alpha)$; and $x^+ \notin  \sigma (\alpha)$ for $\beta > \beta_3(\alpha)$.
\end{itemize}
Moreover,
\begin{equation*}
\beta_2(\alpha)=- \ell \alpha^2+\CO(\alpha^3) \quad \text{and} \quad \beta_3(\alpha)=k \alpha^2+\CO(\alpha^3),
\end{equation*}
\end{lemma}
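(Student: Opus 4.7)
The plan is to compare the explicit expressions for $x^*$ and $x^\pm$ from \eqref{zeros} with $A_\al = \max\{0,\al\}$, and to isolate the transition curves $\beta_2,\beta_3$ by means of the Implicit Function Theorem. Reducing $U_1$ if necessary, I first record that $\ell,k<0$ and $L>0$ on $U_1$ (by continuity from $M>0$), hence $-\ell/L>0$ and $-k/L = 1-\ell/L > 1$. Item $(a)$ is then immediate from $x^* = -\tfrac{k}{L}\al + \CO(\al^2)$: one gets $x^*<0=A_\al$ when $\al<0$, $x^*=0=A_0$ when $\al=0$, and $x^*>\al=A_\al$ when $\al>0$ (using $-k/L>1$), so $x^*\in\inte\sigma(\al)$ in the last case.

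For items $(b)$ and $(c)$ I introduce the non-negative smooth variable $y=\sqrt{-\e/L}$, with $\e=\beta-\beta_1(\al)\le 0$. In these coordinates the zeros from \eqref{zeros} read $x^\pm = -\tfrac{k}{L}\al \pm y + \CO_2(\al,y)$, which are jointly smooth in $(\al,y)$. When $\al\le 0$ the two leading terms of $x^-$ are non-positive, so $x^-<0\le A_\al$ for $\beta<\beta_1(\al)$, giving the easy half of $(b)$. Dually, when $\al\ge 0$ both leading terms of $x^+$ are non-negative and $x^+>A_\al$, giving the easy half of $(c)$.

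To construct the bifurcation curves I solve $x^- = \al$ (for $\al>0$) and $x^+ = 0$ (for $\al<0$). The first equation rearranges to $y = -\tfrac{\ell}{L}\al + \CO_2(\al,y)$; viewing this as $H(\al,y)=0$ with $\partial_y H(0,0)=1\ne 0$, the Implicit Function Theorem supplies a unique $C^r$ solution $y(\al)=-\tfrac{\ell}{L}\al + \CO(\al^2)$. Squaring and using $\beta_1(\al)=\tfrac{k\ell}{L}\al^2 + \CO(\al^3)$ from Lemma \ref{lemma3} yields
\[
\beta_2(\al) = \beta_1(\al) - \tfrac{\ell^2}{L}\al^2 + \CO(\al^3) = \tfrac{\ell(k-\ell)}{L}\al^2 + \CO(\al^3) = -\ell\,\al^2 + \CO(\al^3).
\]
The analogous argument applied to $x^+ = 0$ for $\al<0$ produces $y(\al)=\tfrac{k}{L}\al + \CO(\al^2)$ and
\[
\beta_3(\al) = \beta_1(\al) - \tfrac{k^2}{L}\al^2 + \CO(\al^3) = \tfrac{k(\ell-k)}{L}\al^2 + \CO(\al^3) = k\,\al^2 + \CO(\al^3).
\]
Because $y$ is strictly decreasing in $\beta$, monotonicity upgrades the equalities $x^-=A_\al$ and $x^+=A_\al$ at $\beta=\beta_2(\al)$ and $\beta=\beta_3(\al)$ to the desired strict one-sided inequalities stated in $(b)$ and $(c)$.

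The only delicate point is that $x^\pm$ are not smooth in $\beta$ at $\beta=\beta_1(\al)$ because of the square root in \eqref{zeros}, so a naive IFT in $(\al,\beta)$ is unavailable. The change of variable $y=\sqrt{-\e/L}$ removes the non-smoothness, and once the argument is carried out in $(\al,y)$ the Implicit Function Theorem applies cleanly, with the remainder $\CO_2(\al,y)$ controlling all error terms uniformly on $U_1$.
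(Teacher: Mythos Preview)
Your proof is correct and follows essentially the same route as the paper's own argument. Both proofs handle item $(a)$ directly from the expansion $x^*=-\tfrac{k}{L}\alpha+\CO(\alpha^2)$ with $-k/L>1$, both desingularize the square root in \eqref{zeros} by passing to a blowup variable (the paper uses $u=\sqrt{-\e}$, you use $y=\sqrt{-\e/L}$, a purely cosmetic rescaling), both apply the Implicit Function Theorem in $(\alpha,u)$ (resp.\ $(\alpha,y)$) to locate the transition curves, and both finish the strict inequalities by monotonicity in the blowup variable.
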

\begin{proof}

Statement $(a)$ follows directly from Lemma \ref{lemma3}. Indeed, 
\[
x^*=x(\alpha)=-\dfrac{k}{L}\al+\CO(\al^2),~\text{with}~ -\frac{k}{L}>1.
\]

To see statement $(b)$, first note that $\al\leq 0$ implies that $x^- < 0$, that is, $x^- \notin \sigma (\alpha)$. Now, assuming $\alpha>0$, we shall determine the values ​of $\alpha$ and $\beta$ for which the equality $x^-=\alpha$ holds.  In \eqref{zeros} we have that $\e<0$. So take $u=\sqrt{-\e}$ and define $h:\R^2 \to \R$ as
$$
h(\alpha,u):=x^-  -\alpha= \left( -\frac{k}{L}  -1\right) \alpha  - \frac{u}{\sqrt{L}}  + \CO_2 (\alpha, u).
$$ 
The right-hand side of the above equality is due to \eqref{zeros}. Since $h(0,0)=0$ and $\dis \dfrac{\p h}{\p u}(0,0)=-\frac{ 1}{\sqrt{L}} $, there exists a unique $C^r$ function $u(\alpha)$, defined in a neighborhood of the origin, with $u(0)  =0$ and $u'(0)=-\dfrac{k}{\sqrt{L}}-\sqrt{L} >0$, such that $h(\alpha,u(\alpha))=0$. In this case,
$$
u(\alpha) = \left(-\dfrac{k}{\sqrt{L}}-\sqrt{L} \right) \alpha + \CO(\alpha^2) >0.
$$
Hence $\dis \e =- u(\alpha)^2= -\left( \frac{k^2}{L} +L +2 k \right)\alpha^2 + \CO(\alpha^3)$. Since $\beta =\beta_1(\alpha)+\e$ we conclude that $x^- =\alpha $ provided that  $
\beta=\beta_2(\alpha)= \beta_1(\alpha) - u(\alpha)^2 =-\ell \alpha^2 + \CO(\alpha^3)$.

For each fixed $\alpha>0$, let $h_{\al}(u)=h(\al,u)$. Notice that $h_{\al}(u(\al))=0$  and $h_{\al}'(u(\al))=\dfrac{\p h}{\p u}(\alpha,u(\alpha)) <0$. Therefore, for each $\al>0$, the function $h_{\al}(u)$ is decreasing in a neighborhood of $u(\al)$. Since $u< u(\alpha)$ if and only if $\beta > \beta_2(\alpha)$, and $u>u(\alpha)$ if and only if $\beta < \beta_2(\alpha)$ we conclude that $x^-\in \inte \sigma (\alpha)$ when $\beta > \beta_2(\alpha)$, and $x^- \notin \sigma (\alpha)$ when $\beta <\beta_2(\alpha)$.  This proves statement $(b)$.

To prove statement $(c)$ we observe that $x^+ > \alpha$, and consequently $x^+ \in \sigma (\alpha)$, provided that $\alpha \geq 0$. When  $\alpha<0$ the proof follows analogously to the proof of Statement (a). In this case, for $u= \sqrt{-\e}$, defining $g(\alpha,u)=x^+$ we get the existence of a function $\beta_3(\alpha)$, defined in a neighborhood of the origin, with $\beta_3(\alpha) = k \alpha^2 +\CO(\alpha^3)$, for which $x^+ =0$ when  $\beta =\beta_3(\alpha)$, $x^+ > 0$ when $\beta < \beta_3(\alpha)$, and $x^+ < 0$ when $\beta > \beta_3(\alpha)$. This proves statement $(c)$.
\end{proof}

\subsection{Sliding dynamics} \label{sliding}

When  $\alpha <0$ (resp. $\alpha > 0$),  then $\Sigma^s = \{(x,0):\, \alpha < x < 0 \}$ (resp. $\Sigma^e = \{(x,0):\, 0  < x < \alpha \}$) is a sliding region (resp. escaping region).  In this case, we observe the existence of a unique pseudo equilibrium of $Z$ nearby $Z_0$ as stated in Lemma \ref{R1} (for instance see Figure \ref{sing}).
\begin{figure}[h]
	\subfigure[ $\alpha <0$]
	{\begin{overpic}[width=4.5cm]{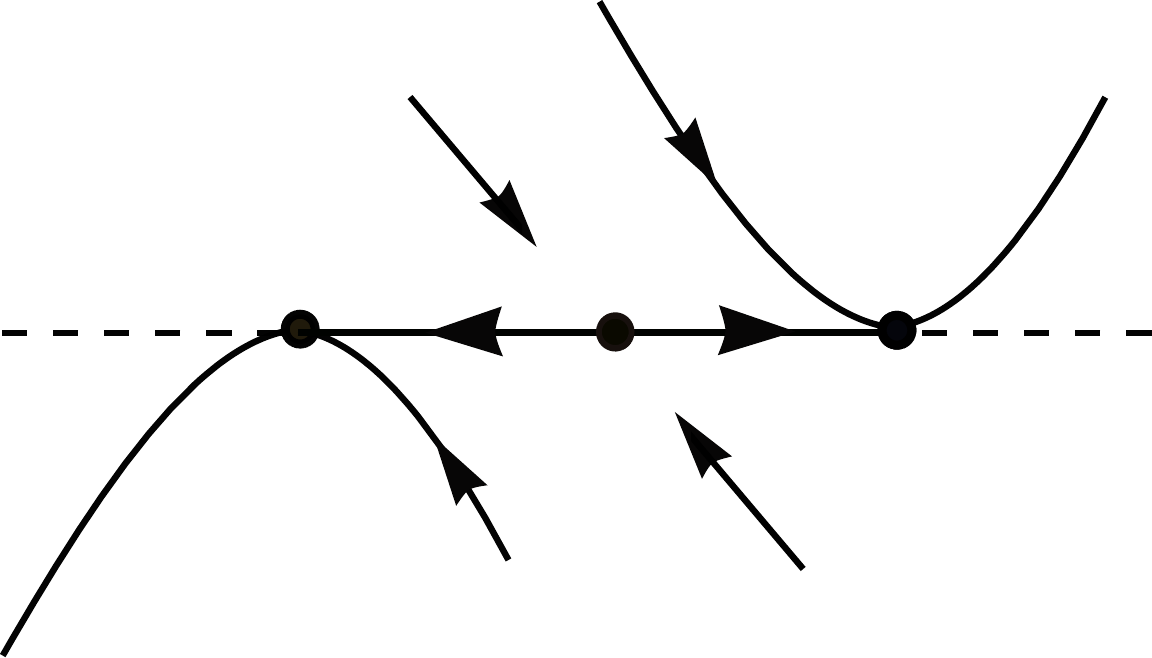}
		\end{overpic}}
		\hspace*{2cm}
		\subfigure[$\alpha >0$]
		{\begin{overpic}[width=4.5cm]{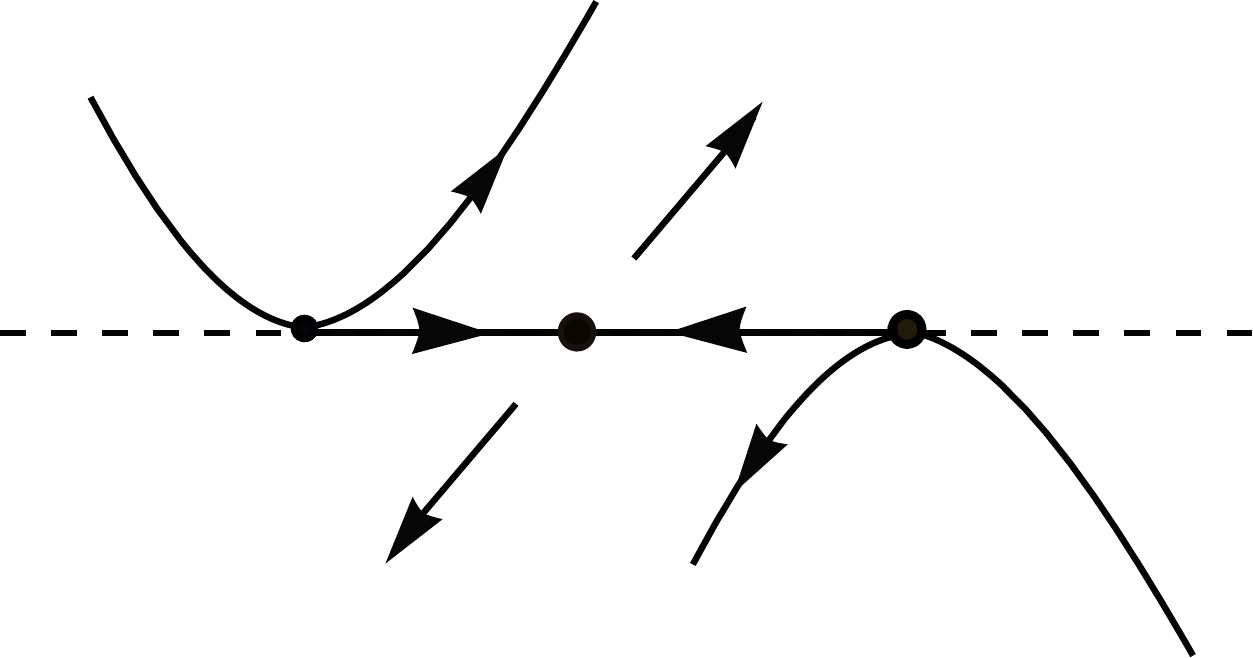}
			\end{overpic}}
			\caption{Sliding and escaping regions, respectively, with their pseudo-equilibria.}
			\label{sing}
		\end{figure}

In the sequence, the neighborhood $\V_0$ will be reduced if necessary.
\begin{lemma}
\label{R1} 
Take $Z=(X,Y) \in \V_0$. If $\alpha < 0$ $($resp. $\alpha > 0)$, then the sliding vector field  defined on $\Sigma^s$ $($resp. the escaping vector field  defined on $\Sigma^e)$ has a unique unstable equilibrium point $(p^u,0)$ $($resp. stable equilibrium point $(p^s,0)$$)$.
\end{lemma}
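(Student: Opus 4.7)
The plan is to express the Filippov sliding vector field restricted to $\Sigma$ as a scalar dynamical system, and to show that pseudo-equilibria are precisely the zeros of a convenient auxiliary function to which the Implicit Function Theorem applies. With $h(x,y)=y$ we have $Xh=X^2$, $Yh=Y^2$, so $Z^s$ is tangent to $\Sigma$ and its tangential component at a point $(x,0)\in\Sigma^s\cup\Sigma^e$ equals
\[
Z^{s,1}(x)=\frac{F(x,Z)}{Y^2(x,0)-X^2(x,0)},\qquad F(x,Z):=Y^2(x,0)\,X^1(x,0)-X^2(x,0)\,Y^1(x,0).
\]
Since the denominator $Y^2-X^2$ is strictly positive on $\Sigma^s$ and strictly negative on $\Sigma^e$, pseudo-equilibria inside either region coincide with zeros of $x\mapsto F(x,Z)$ in the open interval with endpoints $0$ and $\alpha$.

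I would next apply the Implicit Function Theorem to $F$ at $(x,Z)=(0,Z_0)$. By condition $(C)$ and \eqref{p1} one has $F(0,Z_0)=0$, since both $X^2(0,0)$ and $Y^2(0,0)$ vanish at the two-fold, while a direct computation gives
\[
\frac{\partial F}{\partial x}(0,Z_0)=\frac{\partial Y^2}{\partial x}(0,0)\,X^1(0,0)-\frac{\partial X^2}{\partial x}(0,0)\,Y^1(0,0)>0,
\]
the positivity coming from $\partial_x X^2(0,0),\partial_x Y^2(0,0)>0$ together with $X^1(0,0)>0$ and $Y^1(0,0)<0$. Shrinking $\V_0$ if needed, this produces a unique $\C^r$ map $Z\mapsto p(Z)$ with $F(p(Z),Z)\equiv 0$, $p(Z_0)=0$, and $\partial_x F(p(Z),Z)>0$ throughout. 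Moreover, the linearizations $X^2(x,0)=Ax+\CO(x^2)$ and $Y^2(x,0)=B(x-\alpha)+\CO((x-\alpha)^2)$ with $A,B>0$ yield
\[
p(Z)=\frac{B\,X^1(0,0)}{B\,X^1(0,0)-A\,Y^1(\alpha,0)}\,\alpha+\CO(\alpha^{2}),
\]
whose leading coefficient lies strictly in $(0,1)$; hence $p(Z)$ lies strictly between $0$ and $\alpha$, so $(p(Z),0)$ belongs to $\Sigma^s$ when $\alpha<0$ and to $\Sigma^e$ when $\alpha>0$. Uniqueness of the pseudo-equilibrium inside the relevant region is then immediate: that region is the interval between $0$ and $\alpha$, which shrinks to $\{0\}$ as $Z\to Z_0$, so after reducing $\V_0$ once more the only zero of $F(\cdot,Z)$ in it is $p(Z)$.

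Stability is then read off from the sign of $(Z^{s,1})'$ at $p(Z)$. Since the numerator vanishes at $p(Z)$, the quotient rule gives
\[
\bigl(Z^{s,1}\bigr)'(p(Z))=\frac{\partial_x F(p(Z),Z)}{Y^2(p(Z),0)-X^2(p(Z),0)},
\]
and the numerator is positive. On $\Sigma^s$ the denominator is positive, so $(Z^{s,1})'(p(Z))>0$ and the pseudo-equilibrium is unstable, giving $(p^u,0)$; on $\Sigma^e$ the denominator is negative, so $(Z^{s,1})'(p(Z))<0$ and the pseudo-equilibrium is stable, giving $(p^s,0)$.

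The main technical point is the apparent $0/0$ indeterminacy of $Z^{s,1}$ at the two-fold of $Z_0$: one cannot apply IFT directly to the sliding field. Working instead with the smooth numerator $F$, which extends analytically through the two-fold, is what makes the argument go through; the denominator only enters afterwards, when identifying the region in which $p(Z)$ lies and when flipping the sign for stability on $\Sigma^e$ versus $\Sigma^s$.
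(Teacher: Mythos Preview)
Your proof is correct and rests on the same key observation as the paper: pseudo-equilibria are zeros of the smooth numerator $F(x,Z)=Y^2X^1-X^2Y^1$ (the paper's normalized field $N$), and $\partial_x F>0$ near the two-fold. The execution differs, however. The paper checks the signs $N(\alpha,0)<0$ and $N(0,0)>0$ and invokes the Intermediate Value Theorem for existence, then establishes $\partial_x N>0$ on the entire sliding segment to get uniqueness. You instead apply the Implicit Function Theorem at $(0,Z_0)$, obtaining both the unique zero $p(Z)$ and its leading-order expansion, and then argue uniqueness on $[\min(0,\alpha),\max(0,\alpha)]$ by shrinking $\V_0$ so that this interval fits inside the IFT uniqueness neighborhood. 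Your route yields the explicit location $p(Z)=c\,\alpha+\CO(\alpha^2)$ with $c\in(0,1)$ as a byproduct, while the paper's route gives the slightly stronger statement that $N$ is monotone on the whole sliding arc; either is adequate for the lemma.
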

\begin{proof}			
We prove this lemma for the case $\alpha <0$. Denote by $X^1,X^2$ and $Y^1,Y^2$ the coordinates of $X$ and  $Y$, respectively, and consider the sliding vector field $Z^s$ given in  \eqref{svfz}.  In order to study their orbits it is convenient to define the normalized sliding vector field
\begin{equation*}
N(x,0)=(Y h(x,0)- Xh(x,0)) Z^s(x,0)=X^1(x,0) Y^2(x,0) - Y^1(x,0) X^2 (x,0),
\end{equation*}
which has the same phase portrait of $Z^s $ reversing the direction of the trajectory in the escaping region.  Indeed,  $Y h(x,0)- Xh(x,0)$ is positive (resp. negative) for $(x,0)\in\Sigma^s$ (resp. $(x,0)\in\Sigma^e$).

In addition, it is possible to reduce the neighborhood $\V_0$ once more, so that if $Z\in \V_0$, then the following inequalities hold for  $(x,0)$ in a small neighborhood of $(0,0)\in\R^2$:
\begin{equation}
\label{p2}
\begin{array}{ll}
X^2(x,0) <0 ~\text{for} ~ x<0, & X^2(x,0) > 0 ~ \text{for} ~  x> 0, \vspace{0.2cm}\\
Y^2(x,0) <0 ~ \text{for} ~  x< \alpha, & Y^2(x,0) > 0 ~  \text{for} ~ x> \alpha.
\end{array}
\end{equation}

Since  $\alpha<0$, it follows from \eqref{p1} and \eqref{p2} that
\[
\begin{array}{l}
N(\alpha,0) = X^1(\alpha,0) Y^2(\alpha,0)   -   Y^1(\alpha,0) X^2 (\alpha,0) <0,~\text{and} \vspace{0.2cm}\\
N(0,0) = X^1(0,0) Y^2(0,0) - Y^1(0,0) X^2 (0,0) > 0.
\end{array}
\]
Hence $N$ has at least one critical point $(p,0)$ satisfying $\alpha < p < 0$. The unicity of  $p$ will follow by showing that $\dis \frac{\p N}{\p x}(x,0) >0$ for $(x,0)\in \Sigma^s$. So we compute
\begin{equation*}
\begin{array}{ll}
\dis \frac{\p N}{\p x}(x,0) =
&  \dis\frac{\p X^1}{\p x}(x,0) Y^2(x,0) + X^1(x,0) \frac{\p Y^2}{\p x}(x,0)  \vspace*{0.2cm}\\ 
& -  \dis\frac{\p Y^1}{\p x}(x,0) X^2(x,0) - Y^1(x,0) \frac{\p X^2}{\p x}(x,0).
\end{array}
\end{equation*}
Since $X^2(0,0)= Y^2(\alpha,0)=0$, the conditions \eqref{p1}  imply that
$$
\sgn \bigg( \frac{\p N}{\p x}(x,0) \bigg) = \sgn \bigg( X^1(x,0) \frac{\p Y^2}{\p x}(x,0)  - Y^1(x,0) \frac{\p X^2}{\p x}(x,0)\bigg) > 0,
$$
for $(x,0)\in \Sigma^s$. Moreover, $\dfrac{\p N}{\p x}(p,0) >0$ implies that $(p,0)$ is stable.

We conclude this proof  by noticing that $\Sigma^s\cup\Sigma^e\subset \mathcal{A}_0 \cap \Sigma$ provided that $\mathcal{V}_0$ is sufficiently small. The proof for $\alpha>0$ is analogous.
\end{proof}

\section{Proof of Theorem \ref{R2}}
\label{proofA}
Let $\mathcal{A}_0$ and $\V_0$ be, respectively, the annulus of $\Gamma_0$ and the neighborhood of $Z_0$ given by Proposition \ref{fz}. Assume that $\Gamma$ is an elementary simple two-fold cycle of some $Z^*\in\V_0$. Consequently, condition $(C)$ holds for $Z^*$. Therefore, $\eta(Z^*)=(0,0)$. Reciprocally, assume that $\eta(Z^*)=(0,0)$. Therefore, condition $(C)$ holds for $Z^*$, which implies that $Z^*$ admits a simple two-fold cycle $\Gamma\subset \mathcal{A}_0$. Moreover, from Proposition \ref{fz}
\begin{equation}\label{fzstar}
f_{Z^*}(x) = (\ell (Z^*)-k (Z^*))x^2 + \CO(x^3).
\end{equation}
It remains to prove that this cycle is elementary. Note that $\ell (Z_0)-k (Z_0)=M\neq0$. So from the continuity of the functions $\ell$ and $k$ the neighborhood $\V_0$ of $Z_0$ can be reduced in order to guarantee that $\ell (Z)-k (Z)\neq0$ for every $Z\in\V_0$. In particular $\ell (Z^*)-k (Z^*)\neq0$. Hence, from \eqref{fzstar}, $\Gamma$ is elementary. This proves item $(i)$.

Now we shall prove that, for every $Z\in\V_0$, $d\eta(Z)$ is onto. It is equivalent to prove that, for each $(u,v)\in \R^2$, there exists a smooth curve $\CZ:(-\la_0,\la_0)\rightarrow\Omega$ such that $\CZ(0)=Z$ and
\[
d\eta(Z)\cdot\CZ'(0)=\dfrac{d }{d \la}\eta(\CZ(\la))\big|_{\la=0}=(u,v).
\]
Accordingly, let $\CZ(\la)$, $\la\in\R$, be a curve in $\Omega$ such that $\CZ(0)=Z$, $\al(\CZ(\la))=\al(Z)+\la u$, and $\beta(\CZ(\la))=\beta(Z)+\la v$. Therefore,
\[
\dfrac{d }{d \la}\eta(\CZ(\la))\big|_{\la=0}=\lim_{\la\to 0}\dfrac{\eta(\CZ(\la))-\eta(Z)}{\la}=\lim_{\la\to 0}\dfrac{(\la u,\la v)}{\la}=(u,v).
\]
This concludes the proof of Theorem \ref{R2}.

\section{Proof of Theorem \ref{T:BD}}\label{proofB} 
Let $\V_0$ be the neighborhood of $Z_0$ given by Lemma \ref{R1}, and take $\V_1 =\eta^{-1}(U_1)$, where $U_1$ is the neighborhood given by Lemma \ref{lemzeros}. Note that $\V_1 \subset \V_0$, because $U_1  \subset U=\eta(\V_0)$.  In parallel let $\mathcal{A}_0$ be the annulus of $\Gamma_0$ given by Theorem \ref{R2}, and take a annulus $\mathcal{A}_1\subset \mathcal{A}_0$ such that $(\mathcal{A}_1 \cap \Sigma) \setminus \ov{\Sigma^s} \subset \Sigma^c$. This choice for $\mathcal{A}_1$ assures  that the dynamics in $\mathcal{A}_1$ of a vector field $Z\in \V_1$ is completely determined by the function $\eta$.  Now we are able to prove the Theorem \ref{T:BD}.

For $Z\in \V_1$, consider the pseudo equilibrium points given by Lemma \ref{R1}: $(p^u,0)$ for $\al<0$, and $(p^s,0)$, for $\al>0$. From the continuous dependence of the solutions on the parameters $\al$ and $\beta$, there exist two curves $\beta_4(\al)$, defined for $\al>0$, and $\beta_5(\al)$, defined for $\al<0$, for which the following statements hold:
\begin{itemize}
	\item  when $\beta=\beta_4(\al)$  there exists an orbit connecting the fold point  $(\alpha,0)$ and the stable pseudo equilibrium $(p^s,0)$;
	
	\item  when  $\beta=\beta_5(\al)$ there exists an orbit connecting the fold point $(0,0)$ and the unstable pseudo equilibrium $(p^u,0)$.
\end{itemize}
In additional, $0<\beta_4(\al)<\beta_2(\al)$ for $\alpha>0$, and $\beta_3(\al)<\beta_5(\al)<0$ for  $\alpha <0$.

Now consider the regions $R_i$, defined in \eqref{regions}, and the boundaries $B_{ij} = \p R_i \cup \p R_j \setminus \{(0,0)\}$ for $i,j \in \{ 1,2, \ldots, 9\}$.  In order to prove Theorem \ref{T:BD}, we shall use the above results to describe every element of the bifurcation diagram.

Let $Z\in \V_1$ and $\eta(Z)=(\alpha,\beta)$. From the definition of the function $\eta$ we have:
\begin{itemize}
	\item[$(1)$]  two distinct regular fold points, $(0,0)$ and $(\alpha,0)$, respectively, when $\alpha \neq 0,$ that is, $(\al,\beta)\in (R_1\cup R_2 \cup \ov{R_3}\cup \ov{R4}\cup \ov{R_5}\cup{R_6} \cup R_7 \cup \ov{R8} \cup \ov{R9})\setminus\{(0,0)\}$;
	
	\item[$(2)$] a two-fold point at $(0,0)$ if $\alpha=0,$ that is, $(\al,\beta)\in B_{12} \cup B_{67}\setminus\{(0,0)\}$;
	
	\item[$(3)$] and a connection between  $(0,0)$  and $(\alpha,0)$ if $\beta=0,$ that is, $(\al,\beta)\in B_{23} \cup B_{78}$.
\end{itemize}
Moreover, from Lemma \ref{R1} we also conclude that there exist:
\begin{itemize}
   \item[$(4)$] a stable pseudo equilibrium $(p^s,0) \in \Sigma^e$ and two regular fold points, $(0,0)$ and $(\alpha,0)$, with $0 < p^s < \alpha$, when $\alpha>0,$ that is, $(\al,\beta)\in (R_2\cup \ov{R_3}\cup \ov{R4}\cup\ov{R_5}\cup R_6)\setminus\{(0,0)\}$;
	
   \item[$(5)$] a unstable pseudo equilibrium $(p^u,0) \in \Sigma^s$ and two regular fold points, $(0,0)$ and $(\alpha,0)$, with $\alpha < p^u < 0$, when $\alpha<0,$ that is, $(\al,\beta)\in (R_7\cup \ov{R_8}\cup R_9)\setminus\{(0,0)\}$;

   \item[$(6)$] a sliding connection between $(0,0)$ and $(\alpha,0)$, when $\alpha >0$ and $ 0 < \beta < \beta_4(\alpha)$ or $\alpha <0$ and $ \beta_5(\alpha) < \beta < 0,$ that is, $(\al,\beta)\in (R_3\cup R_8)$;
	
   \item[$(7)$] a connection between $(p^s,0)$ and $(\alpha,0)$ when $\beta=\beta_4(\alpha),$ that is,  $(\al,\beta)\in B_{34}$;
   
   \item[$(8)$] a connection between $(0,0)$ and $(p^u,0)$ when $\beta=\beta_5(\alpha),$ that is,  $(\al,\beta)\in B_{89}$;	
	
   \item[$(9)$] a unstable sliding cycle connection passing through $(\alpha,0)$ when $\alpha>0$ and $\beta_4(\alpha) < \beta_2(\alpha),$ that is,  $(\al,\beta)\in R_4$;	
   
   \item[$(10)$] and a stable sliding cycle connection passing through $(0,0)$ when $\alpha<0$ and $\beta_3(\alpha) < \beta_5(\alpha),$ that is,  $(\al,\beta)\in R_9$.	
\end{itemize}
Furthermore, from Remark \ref{defi}, and Lemmas \ref{lemzeros} and \ref{lemzeros2}  we conclude that there exist:
\begin{itemize}
\item[$(11)$] a stable crossing cycle $\gamma_1$ passing through $(x^+,0)$, with $x^+> \text{max}\{0,\alpha\}$,  when $\alpha>0$ and $\beta <\beta_1(\alpha)$ or $\alpha \leq 0$ and $\beta <\beta_3(\alpha),$ that is, $(\al,\beta)\in (R_1\cup \ov{R_2}\cup\cdots\cup\ov{R_4}\cup R_5)\setminus\{(0,0)\}$; 
	
\item[$(12)$] a unstable critical crossing cycle passing through $(x^-,0)$, with $x^-=\alpha$, when $\beta=\beta_2(\alpha)$, that is, $(\al,\beta)\in B_{45}$ ; 
	
\item[$(13)$] a unstable crossing cycle $\gamma_2$ passing through $(x^-,0)$ when $\alpha>0$ and $\beta_2(\alpha) < \beta < \beta_1(\alpha),$ that is, $(\al,\beta)\in R_5$. In this case, $x^- < x^+$ such that $\gamma_2 \subset ext (\gamma_1)$; 	 
	
\item[$(14)$] a semi-stable crossing cycle $\gamma_3$ passing through $(x^*,0)$ when $\alpha>0$ and $\beta =\beta_1(\alpha),$ that is, $(\al,\beta)\in B_{56}$.  This cycle $\gamma_3$ comes from the collapse between the cycles $\gamma_1$ and $\gamma_2$, since for $\beta =\beta_1(\alpha)$ we have $x^\pm=x^*$;
	
\item[$(15)$] and a stable critical crossing cycle passing through $(x^+,0)$, with $x^+=0$,  when $\beta= \beta_3(\alpha),$ that is, $(\alpha,\beta)\in B_{91}$.
\end{itemize}

This concludes the proof of Theorem \ref{T:BD}.

\section{Piecewise Hamiltonian Example}\label{ppm}

In this section we present a 2-parameter family of piecewise Hamiltonian vector fields $Z_{a,b}$ realizing the bifurcation diagram described by Figure  \ref{stable}. Let $U\subset\R^2$ be a small neighborhood of the origin $(0,0)$, and, for $(a,b)\in U$, consider the following piecewise polynomial vector field:
\begin{equation*}\label{example}
Z_{a,b}(x,y)=\left\{\begin{array}{lr} 
X_{a,b}(x,y)=
\left(\begin{array}{c}
1-y\\
x-8x^3
\end{array}\right) & \textrm{if} ~ y>0,\vspace{0.2cm}\\
Y_{a,b}(x,y)=
\left(\begin{array}{c}
-1-y\\
(x-a)(1+a-2b-3x)
\end{array}\right)  & \textrm{if}~ y<0.
\end{array}\right.
\end{equation*} 
Notice that $X_{a,b}$ and $Y_{a,b}$ are Hamiltonian vector fields. Indeed, for  
\begin{equation*}\label{hamil}
\begin{array}{l}
H^+_{a,b}(x,y)= 2x^4-\dfrac{x^2}{2}+y-\dfrac{y^2}{2} \text{ and} \vspace{0.2cm}\\
H^-_{a,b}(x,y)=x^3-\dfrac{(1+4a-2b)x^2}{2}+a(1+a-2b)x-\dfrac{y^2}{2}-y,
\end{array}
\end{equation*}
we have
\[
X_{a,b}=\left(\dfrac{\p H^+_{a,b}}{\p y},-\dfrac{\p H^+_{a,b}}{\p x}\right)\text{ and } Y_{a,b}=\left(\dfrac{\p H^-_{a,b}}{\p y},-\dfrac{\p H^-_{a,b}}{\p x} \right).
\]

It is easy to see that $X_{a,b}$ has a visible fold point at $(0,0)$ and $Y_{a,b}$ has a visible fold point at $(a,0)$, so that the piecewise Hamiltonian vector field $Z_{a,b}$ has a visible two-fold singularity at $(0,0)$, when $a=0$. Notice that $\al(Z_{a,b})=a$. 

Given $x_0>0$ small, denote by $(\xi^+_{a,b}(x_0),0)$ the first intersection of the forward trajectory of $X_{a,b}$ passing through $(x_0,0)$ with $\Sigma$, and by  $(\xi^-_{a,b}(x_0),0)$ the first intersection of the backward trajectory of $Y_{a,b}$ passing through $(x_0,0)$ with $\Sigma$. Using that $\xi^+_{a,b}(x_0)$ and $\xi^-_{a,b}(x_0)$ satisfy
\[
H^+_{a,b}(\xi^+_{a,b}(x_0),0)=H^+_{a,b}(x_0,0) \quad \text{and} \quad H^-_{a,b}(\xi^-_{a,b}(x_0),0)=H^-_{a,b}(x_0,0),
\]
we compute
\[
\begin{array}{l}
\xi^+_{a,b}(x_0)=\dfrac{\sqrt{1-4x_0^2}}{2} \quad \text{and}\vspace{0.2cm}\\
\xi^-_{a,b}(x_0)= \dfrac{1+4a-2b-2x_0+\sqrt{(1-8a-2b+6x_0)(1-2b-2x_0)}}{4}.
 \end{array}
\]

Observe that $\be(Z_{a,b})=\xi^+_{a,b}(0)-\xi^-_{a,b}(a)=b$. Therefore, $\eta(Z_{0,0})=(0,0)$ which implies that $Z_{0,0}$ has a simple two-fold cycle $\Gamma_0$. Moreover, the displacement function $f_{Z_{0,0}}$ defined in \eqref{fZ} is given by
\[
\begin{array}{rl}
f_{Z_{0,0}}(x_0)=&\xi^+_{a,b}(x_0)-\xi^-_{a,b}(x_0)\vspace{0.2cm}\\
=& \dfrac{-1+2x_0-\sqrt{1+4(1-3x_0)x_0}+2\sqrt{1-4x_0^2}}{4}\vspace{0.2cm} \\
=& x_0^2+\CO(x_0^3).
\end{array}
\]
Hence $\Gamma_0$ is a stable elementary simple two-fold cycle. Finally, since $\eta(Z_{a,b})=(a,b)$, the unfolding of $\Gamma_0$ is given by Theorem \ref{T:BD}, which means that the 2-parameter family of piecewise Hamiltonian systems $Z_{a,b}$ realizes, for $(a,b)\in U$, the bifurcation diagram described by Figure \ref{stable}.

\section{Conclusion and Further Directions}\label{conclusion}

The primary goal of this work has been to present a qualitative and geometric analysis of the generic unfolding of a codimension-two cycle. Such object, named elementary simple two-fold cycle, is characterized by a regular trajectory connecting a two-fold singularity to itself for which it is well defined a first return map with nonvanishing second derivative at the two-fold.  We analyzed the codimension-two scenario through the exhibition of its bifurcation diagram, that is, loci of codimension-one phenomena, in the parameter space, emanating from the origin which corresponds to the elementary simple two-fold cycle. Each locus of codimension-one phenomenon corresponds to either a saddle node bifurcation of limit cycles, a critical crossing bifurcation, a connection between fold-regular singularities, or a connection between pseudo equilibrium and regular-fold singularity. Finally, we provided a 2-parameter family of piecewise mechanical systems realizing the bifurcation diagram.

Regarding future works, a first possible direction is to consider polycycles of planar Filippov systems. A polycycle is a simple closed curve composed by finitely many singularities connected by regular trajectories for which a first return map is well defined. The same qualitative aspects that have been investigated in this paper for simple two-fold cycles can also be considered for polycycles in planar Filippov systems. There are also many open questions  concerning the dynamics of piecewise differential systems in higher dimensions. The two-fold singularity has already been considered in higher dimensional nonsmooth differential systems (see, for instance, \cite{CJ,T82,T90,T93}), so it is natural to inquire about  two-fold cycles in this context.  
Since a two-fold singularity is generically not an isolated singularity in higher dimension, some extra difficulties may naturally appear in such approach. Moreover, in the $n$-dimensional case, generic conditions may be assumed in order to guarantee that a simple two-fold cycle has codimension one when $n=3$, and codimension zero when $n>3$.

\section*{Acknowledgments}

We thank to the referees for their comments and suggestions which helped us to improve greatly the presentation of this paper.

DDN is partially supported by a FAPESP grant 2016/11471-2. MAT is partially supported by a FAPESP grant 2012/18780-0 and by a CNPq grant 300596/ 2009-0. IOZ is partially supported by FAPESP grants 2012/23591-1 and 2013/21078-8.

\bibliographystyle{abbrv}
\bibliography{references.bib}

\begin{thebibliography}{10}

\bibitem{AJMT}
K.~Andrade, M.~Jeffrey, R.~Miranda, and M.~A. Teixeira.
\newblock Homoclinic boundary-saddle bifurcations in nonsmooth vector fields.
\newblock {\em Preprint ArXiv:1701.05857}, pages 1--39, 2017.

\bibitem{AVK}
A.~A. Andronov, A.~A. Vitt, and S.~E. Khaikin.
\newblock {\em Theory of oscillators}.
\newblock Translated from the Russian by F. Immirzi; translation edited and
  abridged by W. Fishwick. Pergamon Press, Oxford-New York-Toronto, Ont., 1966.

\bibitem{CLV}
J.~Castillo, J.~Llibre, and F.~Verduzco.
\newblock The pseudo-{H}opf bifurcation for planar discontinuous piecewise
  linear differential systems.
\newblock {\em Nonlinear Dynam.}, 90(3):1829--1840, 2017.

\bibitem{CRM}
A.~Colombo, M.~Jeffrey, J.~T. L{\'{a}}zaro, and J.~M. Olm.
\newblock {\em Extended Abstracts Spring 2016. Nonsmooth Dynamics}, volume~8 of
  {\em Trends in Mathematics}.
\newblock Birkhäuser, Cham, 2017.

\bibitem{CJ}
A.~Colombo and M.~R. Jeffrey.
\newblock The two-fold singularity of nonsmooth flows: leading order dynamics
  in {$n$}-dimensions.
\newblock {\em Phys. D}, 263:1--10, 2013.

\bibitem{BBCK}
M.~di~Bernardo, C.~J. Budd, A.~R. Champneys, and P.~Kowalczyk.
\newblock {\em Piecewise-smooth dynamical systems}, volume 163 of {\em Applied
  Mathematical Sciences}.
\newblock Springer-Verlag London, Ltd., London, 2008.
\newblock Theory and applications.

\bibitem{E}
I.~Ekeland.
\newblock Discontinuit\'es de champs hamiltoniens et existence de solutions
  optimales en calcul des variations.
\newblock {\em Inst. Hautes \'Etudes Sci. Publ. Math.}, (47):5--32 (1978),
  1977.

\bibitem{F}
A.~F. Filippov.
\newblock {\em Differential equations with discontinuous righthand sides},
  volume~18 of {\em Mathematics and its Applications (Soviet Series)}.
\newblock Kluwer Academic Publishers Group, Dordrecht, 1988.

\bibitem{FPT}
E.~Freire, E.~Ponce, and F.~Torres.
\newblock On the critical crossing cycle bifurcation in planar {F}ilippov
  systems.
\newblock {\em J. Differential Equations}, 259(12):7086--7107, 2015.

\bibitem{GST}
M.~Guardia, T.~M. Seara, and M.~A. Teixeira.
\newblock Generic bifurcations of low codimension of planar {F}ilippov systems.
\newblock {\em J. Differential Equations}, 250(4):1967--2023, 2011.

\bibitem{K}
V.~S. Kozlova.
\newblock Structural stability of a discontinuous system.
\newblock {\em Vestnik Moskov. Univ. Ser. I Mat. Mekh.}, (5):16--20, 1984.

\bibitem{KRG}
Y.~A. Kuznetsov, S.~Rinaldi, and A.~Gragnani.
\newblock One-parameter bifurcations in planar {F}ilippov systems.
\newblock {\em Internat. J. Bifur. Chaos Appl. Sci. Engrg.}, 13(8):2157--2188,
  2003.

\bibitem{L}
S.~Lang.
\newblock {\em Introduction to differentiable manifolds}.
\newblock Universitext. Springer-Verlag, New York, second edition, 2002.

\bibitem{LH}
L.~Li and L.~Huang.
\newblock Concurrent homoclinic bifurcation and {H}opf bifurcation for a class
  of planar {F}ilippov systems.
\newblock {\em J. Math. Anal. Appl.}, 411(1):83--94, 2014.

\bibitem{Var}
O.~Makarenkov and J.~S. Lamb~(Eds.).
\newblock Special issue on dynamics and bifurcations of nonsmooth systems.
\newblock {\em Phys. D}, 241(22):1825--2082, 2012.

\bibitem{mat}
Y.~Matsumoto.
\newblock {\em An introduction to {M}orse theory}, volume 208 of {\em
  Translations of Mathematical Monographs}.
\newblock American Mathematical Society, Providence, RI, 2002.

\bibitem{NT}
D.~D. Novaes.
\newblock {\em Regularization and minimal sets for nonsmooth dynamical
  systems}.
\newblock 2015.
\newblock Thesis (Ph.D.)--Universidade Estadual de Campinas.

\bibitem{NPV}
D.~D. Novaes, G.~Ponce, and R.~Var\~ao.
\newblock Chaos {I}nduced by {S}liding {P}henomena in {F}ilippov {S}ystems.
\newblock {\em J. Dynam. Differential Equations}, 29(4):1569--1583, 2017.

\bibitem{Rou}
R.~Roussarie.
\newblock {\em Bifurcation of planar vector fields and {H}ilbert's sixteenth
  problem}, volume 164 of {\em Progress in Mathematics}.
\newblock Birkh\"auser Verlag, Basel, 1998.

\bibitem{SSTC2}
L.~P. Shilnikov, A.~L. Shilnikov, D.~Turaev, and L.~O. Chua.
\newblock {\em Methods of qualitative theory in nonlinear dynamics. {P}art
  {II}}, volume~5 of {\em World Scientific Series on Nonlinear Science. Series
  A: Monographs and Treatises}.
\newblock World Scientific Publishing Co., Inc., River Edge, NJ, 2001.

\bibitem{SSTC}
L.~P. Shilnikov, A.~L. Shilnikov, D.~V. Turaev, and L.~O. Chua.
\newblock {\em Methods of qualitative theory in nonlinear dynamics. {P}art
  {I}}, volume~4 of {\em World Scientific Series on Nonlinear Science. Series
  A: Monographs and Treatises}.
\newblock World Scientific Publishing Co., Inc., River Edge, NJ, 1998.

\bibitem{simpson}
D.~J.~W. Simpson.
\newblock {\em Bifurcations in piecewise-smooth continuous systems}, volume~70
  of {\em World Scientific Series on Nonlinear Science. Series A: Monographs
  and Treatises}.
\newblock World Scientific Publishing Co. Pte. Ltd., Hackensack, NJ, 2010.

\bibitem{S}
D.~J.~W. Simpson.
\newblock On resolving singularities of piecewise-smooth discontinuous vector
  fields via small perturbations.
\newblock {\em Discrete Contin. Dyn. Syst.}, 34(9):3803--3830, 2014.

\bibitem{T}
M.~A. Teixeira.
\newblock Generic singularities of discontinuous vector fields.
\newblock {\em An. Acad. Brasil. Ci\^enc.}, 53(2):257--260, 1981.

\bibitem{T82}
M.~A. Teixeira.
\newblock On topological stability of divergent diagrams of folds.
\newblock {\em Math. Z.}, 180(3):361--371, 1982.

\bibitem{T90}
M.~A. Teixeira.
\newblock Stability conditions for discontinuous vector fields.
\newblock {\em J. Differential Equations}, 88(1):15--29, 1990.

\bibitem{T93}
M.~A. Teixeira.
\newblock Generic bifurcation of sliding vector fields.
\newblock {\em J. Math. Anal. Appl.}, 176(2):436--457, 1993.

\bibitem{Vis}
S.~M. Vishik.
\newblock Vector fields in the neighborhood of the boundary of a manifold.
\newblock {\em Vestnik Moskov. Univ. Ser. I Mat. Meh.}, 27(1):21--28, 1972.

\end{thebibliography}

\end{document}